\newcommand{\gas}[2]{(#1,#2)}	
\newcommand{\Rec}[2]{\mathsf{Rec}_{#1}\left(#2\right)}
\newcommand{\MinRec}[2]{\mathsf{Rec}^{\mathsf{min}}_{#1}\left(#2\right)}
\newcommand{\Config}[1]{\mathsf{Configs}\left(#1\right)}
\newcommand{\ctp}{\phi_{CP}}
\newcommand{\ttc}{\phi_{TC}}
\newcommand{\ctt}{\phi_{CT}}
\newcommand{\CanonTop}{\mathsf{CanonTop}}
\newcommand{\red}[1]{\textcolor{red}{#1}}
\newcommand{\ra}[1]{\xrightarrow{#1}}
\newcommand{\T}{\mathcal{T}}
\newcommand{\level}[1]{\mathsf{level}\left(#1\right)}
\newcommand\Levelpoly[3]{\mathrm{Level}_{#1,#2}\left(#3\right)}
\newcommand{\MIN}{\mathsf{LocMin}}
\newcommand{\MAX}{\mathsf{LocMax}}
\newcommand{\Ext}{\mathsf{ext}}
\newcommand{\Int}{\mathsf{int}}
\newcommand\cls{{\mathcal{S}}}
\newcommand\clsn{\cls_n}
\newcommand\tut{\mathsf{T}}
\newcommand\mpi{\mathcal{M}_\pi}
\newcommand\tpi{T_\pi}
\newcommand\nab{{\sc\small NAB}}
\newcommand\nabs{{\sc\small NAB}s}
\newcommand\cnab{{\sc\small CNAB}}
\newcommand\cnabs{{\sc\small CNAB}s}
\newcommand\cmnab{{\sc\small CMNAB}}
\newcommand\cmnabs{{\sc\small CMNAB}s}
\newtheorem{theorem}{Theorem}[section]
 \newtheorem{corollary}[theorem]{Corollary}
 \newtheorem{prop}[theorem]{Proposition}
 \newtheorem{lemma}[theorem]{Lemma}
 \newtheorem{fact}[theorem]{Fact}
 \theoremstyle{definition}
 \newtheorem{example}[theorem]{Example}
 \newtheorem{problem}[theorem]{Problem}
 \newtheorem{definition}[theorem]{Definition}
 \newtheorem{remark}[theorem]{Remark}
 \theoremstyle{remark}
 \newtheorem{algorithm}[theorem]{Algorithm}
\title[Permutation graphs and the Abelian sandpile model]
{Permutation graphs and the Abelian sandpile model, tiered trees and non-ambiguous binary trees}
\author[M. Dukes]{Mark Dukes}
\address{UCD School of Mathematics and Statistics, University College Dublin, Dublin 4, Ireland} 
\email{mark.dukes@ccc.oxon.org}
\author[T. Selig]{Thomas Selig}
\address{Mathematics Division, Science Institute, University of Iceland, Dunhaga 5, 107 Reykjav\'ik, Iceland.}
\email{selig@hi.is}
\author[J.P. Smith]{Jason P. Smith}
\address{Department of Mathematics, University of Aberdeen, Aberdeen AB24 3FX, UK} 
\email{jason.smith@abdn.ac.uk}
\author[E. Steingr\'imsson]{Einar Steingr\'imsson}
\address{Department of Computer and Information Sciences, University of Strathclyde, Glasgow G1 1XH, U.K.} 
\email{einar@alum.mit.edu}
\thanks{Dukes, Selig and Steingr\'imsson were supported by grant EP/M015874/1 from The Engineering and Physical Sciences Research Council.}
\thanks{Smith was supported by grant EP/M027147/1 from The Engineering and Physical Sciences Research Council.}
\date{\today}
\begin{document}
\begin{abstract}
A permutation graph is a graph whose edges are given by inversions of a permutation. We study the Abelian sandpile model (ASM) on such graphs.  We exhibit a bijection between recurrent configurations of the ASM on permutation graphs and the tiered trees introduced by Dugan et al.~\cite{DGGS}. This bijection allows certain parameters of the recurrent configurations to be read on the corresponding tree. In particular, we show that the level of a recurrent configuration can be interpreted as the external activity of the corresponding tree, so that the bijection exhibited provides a new proof of a famous result linking the level polynomial of the ASM to the ubiquitous Tutte polynomial. We show that the set of minimal recurrent configurations is in bijection with the set of complete non-ambiguous binary trees introduced by Aval et al.~\cite{ABBS}, and introduce a multi-rooted generalization of these that we show to correspond to all recurrent configurations. In the case of permutations with a single descent, we recover some results from the case of Ferrers graphs presented in \cite{DSSS}, while we also recover results of Perkinson et al.~\cite{PYY} in the case of threshold graphs.
\end{abstract}

\maketitle
\thispagestyle{empty}

\section{Introduction}\label{sec:intro}

In the Abelian sandpile model (ASM) on a graph, each vertex has a number of ``grains''. If a vertex has at least as many grains as its degree is then it can be toppled, donating one grain to each of its neighbors. If a (nonempty) sequence of topplings from a configuration $c$ of grains leads to $c$ again, then $c$ is said to be recurrent.

In this paper we study the ASM on permutation graphs.  For a permutation $\pi=\pi_1\pi_2\ldots \pi_n$ this is the graph whose vertices are the integers $1,2,\ldots,n$ with an edge between $i$ and $j$ if and only if $i<j$ and $\pi_i>\pi_j$, that is, if $\pi_i$ and $\pi_j$ form an inversion in $\pi$.

This paper generalizes the results in \cite{DSSS}, where the recurrent configurations on Ferrers graphs were classified in terms of decorated EW-tableaux, since Ferrers graphs are isomorphic to permutation graphs of permutations with a single descent 
We extend the bijection in \cite{DSSS} between recurrent configurations on Ferrers graphs and the intransitive trees of Postnikov~\cite{Post}, to bijectively connect recurrent configurations of permutation graphs and the tiered trees introduced by Dugan et al.~\cite{DGGS}, of which the intransitive trees are a special case.

In~\cite{ABBS}, Aval et al. introduced the so-called complete non-ambiguous binary trees (\cnabs), which arise from certain 0/1 fillings of square Ferrers diagrams. We show that the set of minimal recurrent configurations on permutation graphs is in bijection with  \cnabs.  We then generalize the \cnabs, which have a canonical root vertex, to a multirooted version, which we show to be in bijection with all recurrent configurations on the corresponding permutation graphs.

We also show that our results extend those of Perkinson et al. \cite{PYY}, connecting 
parking functions and labeled spanning trees of threshold graphs, which are a subset of permutation graphs.


The paper is organized as follows. In Section~\ref{sec:defs} we recall necessary 
definitions and provide a link between tiered trees and spanning trees of permutation graphs. 
In Section~\ref{sec:main_bij} we exhibit a bijection between tiered trees and recurrent configurations of the ASM on permutation graphs. We show how the level statistic and canonical toppling of a recurrent configuration can be read from the corresponding tree, and interpret the level statistic as the external activity of the tree. This provides a new proof, in the case of permutation graphs, of the famous result linking the level polynomial of the ASM to the ubiquitous Tutte polynomial (see Proposition~\ref{pro:Tutte_level}).
In Section~\ref{sec:minrec_CNAB} we recall the definition of complete non-ambiguous binary trees introduced by Aval et al.~\cite{ABBS}, show that these are in bijection with the set of minimal recurrent configurations of the ASM and introduce a generalization that we show to correspond to all recurrent configurations. 
Finally, in Section~\ref{sec:specialisations} we study two special cases of permutation graphs, namely Ferrers graphs (corresponding to permutations with a single descent) and threshold graphs, and recover results from~\cite{DSSS} and ~\cite{PYY} respectively.

\section{Definitions and Preliminaries}\label{sec:defs}

For any positive integer $n$, we let $[n]:=\{1,\ldots,n\}$ and $\clsn$ be the set of permutations of~$[n]$.

\subsection{Permutation graphs}\label{sec:perm_graphs}

To a permutation $\pi = \pi_1 \cdots \pi_n \in \clsn$, we associate a graph $G_{\pi}$ as follows. The vertex set of $\pi$ is $[n]$ and the edges are the pairs $(\pi_i,\pi_j)$ such that $i<j$ and $\pi_i>\pi_j$, that is, $(i,j)$ is an inversion of $\pi$. Such a graph is called a \emph{permutation graph}.

A permutation $\pi \in \clsn$ is said to be \emph{indecomposable} if there exists no positive integer $k<n$ such that $\{\pi_1,\ldots,\pi_k\} = [k]$. The following is well known, see for example \cite[Lemma~3.2]{koh-reh-conn-perm-graphs}.

\begin{fact}
A permutation graph $G_{\pi}$ is connected if and only if $\pi$ is indecomposable.
\end{fact}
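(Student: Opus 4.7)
The plan is to prove both implications directly from the edge rule of Section 2: for values $a < b$, there is an edge between $a$ and $b$ in $G_\pi$ iff $b$ appears before $a$ in $\pi$; writing $P(v) := \pi^{-1}(v)$ for the position of value $v$, the non-edge condition becomes $P(a) < P(b)$.

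The forward direction is immediate. If $\{\pi_1,\ldots,\pi_k\} = [k]$ for some $k < n$, then every value in $[k]$ occupies one of the first $k$ positions, so it precedes every value of $\{k+1,\ldots,n\}$ in $\pi$; hence no edges of $G_\pi$ cross the partition $[k] \sqcup \{k+1,\ldots,n\}$, and $G_\pi$ is disconnected.

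For the converse, suppose $G_\pi$ is disconnected, let $C$ denote the connected component of vertex $1$, and set $D := [n] \setminus C \neq \emptyset$. The plan is to show $C = [\,|C|\,]$; once this is established, reading the forward argument backwards gives $\{\pi_1,\ldots,\pi_{|C|}\} = [\,|C|\,]$, so $\pi$ is decomposable. Suppose for contradiction that $C$ is not an initial segment, and pick $c \in C$, $d \in D$ with $c > d$. The non-edge condition applied to the pairs $\{1,d\}$ and $\{d,c\}$ gives $P(1) < P(d) < P(c)$.

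To derive the contradiction, take a path $1 = u_0, u_1, \ldots, u_r = c$ inside $C$. Since $u_0 < d < u_r$ and no vertex on the path equals $d$, there is an index $i$ with $u_{i-1} < d < u_i$. Applying the non-edge condition to the pairs $\{u_{i-1}, d\}$ and $\{d, u_i\}$ (both crossing from $C$ to $D$) yields $P(u_{i-1}) < P(d) < P(u_i)$, so $u_{i-1}$ precedes $u_i$ in $\pi$. But $\{u_{i-1}, u_i\}$ is an edge of $G_\pi$ with $u_{i-1} < u_i$, which forces $P(u_i) < P(u_{i-1})$, a contradiction. The only non-routine step in the whole argument is precisely this path-crossing; once $C$ is known to be an initial segment, what remains is just unpacking definitions.
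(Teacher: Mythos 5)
Your proof is correct. Note that the paper does not actually prove this statement: it is stated as a Fact and attributed to the literature (Lemma~3.2 of the cited reference on connectivity of permutation graphs), so there is no in-paper argument to compare against. Your argument supplies a clean, self-contained proof. The forward direction (decomposable $\Rightarrow$ disconnected) is the routine unpacking of definitions, and your converse correctly identifies the one genuinely non-trivial point: showing that the component $C$ of vertex $1$ must be an initial segment $[\,|C|\,]$. The path-crossing step does exactly what is needed --- if $u_{i-1}<d<u_i$ for consecutive path vertices in $C$ and $d\notin C$, then the two non-edges force $P(u_{i-1})<P(d)<P(u_i)$ while the edge $\{u_{i-1},u_i\}$ forces $P(u_i)<P(u_{i-1})$ --- and the existence of such an index $i$ follows from the intermediate-value observation on the path from $1$ to $c$. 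The only cosmetic remark is that the preliminary chain $P(1)<P(d)<P(c)$ obtained from the pairs $\{1,d\}$ and $\{d,c\}$ is redundant, since the subsequent path argument subsumes it; removing it would tighten the write-up without affecting validity.
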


Figure~\ref{fig:example_permgraph} shows the graphs associated with the permutations $\pi = 23541$ and $\pi' = 23154$. Note that $\pi'$ can be decomposed as 231--54, while $\pi$ is indecomposable. Thus the graph $G_{\pi}$ is connected, while $G_{\pi'}$ is not.

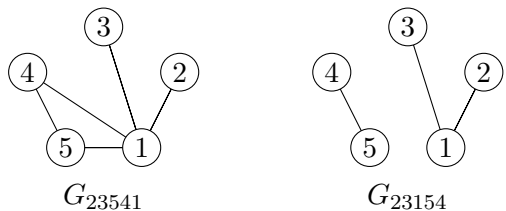
\begin{figure}[h]
  \centering
  \begin{tikzpicture}[scale=0.5]
  
  \draw (0,0)--(2,0)--(0,0)--(-1,2)--(2,0)--(1,3.2)--(2,0)--(3,2)--(2,0);
  \draw [fill=white] (0,0) circle [radius=0.5];
  \draw [fill=white] (-1,2) circle [radius=0.5];
  \draw [fill=white] (1,3.2) circle [radius=0.5];
  \draw [fill=white] (3,2) circle [radius=0.5];
  \draw [fill=white] (2,0) circle [radius=0.5];
  \node at (0,0) {$5$};
  \node at (-1,2) {$4$};
  \node at (1,3.2) {$3$};
  \node at (3,2) {$2$};
  \node at (2,0) {$1$};
  \node at (1,-1.3) {$G_{23541}$};
  
  \begin{scope}[shift={(8,0)}]
  \draw (0,0)--(-1,2);
  \draw (1,3.2)--(2,0)--(3,2)--(2,0);
  \draw [fill=white] (0,0) circle [radius=0.5];
  \draw [fill=white] (-1,2) circle [radius=0.5];
  \draw [fill=white] (1,3.2) circle [radius=0.5];
  \draw [fill=white] (3,2) circle [radius=0.5];
  \draw [fill=white] (2,0) circle [radius=0.5];
  \node at (0,0) {$5$};
  \node at (-1,2) {$4$};
  \node at (1,3.2) {$3$};
  \node at (3,2) {$2$};
  \node at (2,0) {$1$};
  \node at (1,-1.3) {$G_{23154}$};
  \end{scope}
  \end{tikzpicture}
  \caption{The graphs associated with the permutations $\pi = 23541$ (left) and $\pi' = 23154$ (right).\label{fig:example_permgraph}}
\end{figure}

Since we will be analyzing the ASM on permutation graphs, and the ASM is only defined on connected graphs, we will from now on only deal with permutation graphs of indecomposable permutations unless otherwise specified.

\subsection{Tiered trees}\label{sec:tiered_trees}

Tiered trees were introduced in~\cite{DGGS} as a generalization of the intransitive trees introduced by Postnikov~\cite{Post}, the latter of which have exactly two tiers. 

\begin{definition}
A \emph{tiered tree} of size $n$ is a pair $(T,t)$ where:
\begin{itemize}
\item $T$ is a labeled tree on $[n]$.
\item $t$ is a surjective mapping from $[n] \rightarrow [k]$ for some $k$ such that for any edge $(i,j)$ of $T$ with $i>j$ we have $t(i) < t(j)$.
\end{itemize}
The function $t$ is called the \emph{tiering} function of the tiered tree $(T,t)$, and the integer $k$ is its number of tiers.

A tiered tree is said to be \emph{fully tiered} if its number of tiers equals its number of vertices, that is, $k=n$, or equivalently, if its tiering function is a bijection.
\end{definition}

\begin{remark}
The condition $t(i)<t(j)$ is reversed in~\cite{DGGS}. This corresponds to replacing the function $t$ with $k+1-t$. The reason we reverse this condition is to make the link between tiered trees and permutation graphs simpler.
\end{remark}

\subsection{Fully tiered trees and permutation graphs}\label{sec:tieredtrees_permgraphs}

\begin{lemma}\label{lem:tiered_trees_fully_tiered}
Let $\T = (T,t)$ be a tiered tree. Then there exists a fully tiered tree $\T' = (T,t')$.
\end{lemma}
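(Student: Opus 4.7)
The plan is to construct $t'$ by refining $t$: replace each tier with a consecutive block of singleton tiers, distributing the $s_m := |t^{-1}(m)|$ vertices of tier $m$ bijectively among the block. The surjectivity of $t$ onto $[k]$ and the fact that the blocks partition $[n]$ will make the new tiering function a bijection $[n] \to [n]$, which is what ``fully tiered'' requires.

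First I would record a preliminary observation: \emph{vertices in the same tier of $(T,t)$ are non-adjacent in $T$}. Indeed, if $(i,j)$ were an edge with $i > j$ and $t(i)=t(j)$, then the defining condition $t(i)<t(j)$ would be violated. Hence each tier is an independent set in $T$.

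Next I would make the construction precise. Set $a_1 := 1$ and $a_{m+1} := a_m + s_m$, so that the intervals $B_m := \{a_m, a_m+1, \ldots, a_m + s_m - 1\}$ for $m \in [k]$ partition $[n]$. For each $m$, pick any bijection $\beta_m : t^{-1}(m) \to B_m$ (for definiteness, say $\beta_m$ sends the elements of $t^{-1}(m)$ to those of $B_m$ in increasing order of vertex label). Define $t' : [n] \to [n]$ by $t'(v) := \beta_{t(v)}(v)$. By construction $t'$ is a bijection.

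Finally I would verify the tiering condition for $(T, t')$. Let $(i,j)$ be an edge of $T$ with $i>j$. By the preliminary observation, $i$ and $j$ lie in different tiers, so $t(i) < t(j)$, hence $t(i)+1 \le t(j)$ and therefore $\max B_{t(i)} = a_{t(i)} + s_{t(i)} - 1 = a_{t(i)+1} - 1 < a_{t(j)} = \min B_{t(j)}$. Since $t'(i) \in B_{t(i)}$ and $t'(j) \in B_{t(j)}$, this yields $t'(i) < t'(j)$, as required. There is no real obstacle here; the only point worth flagging is the preliminary observation, which frees us to refine each tier in an arbitrary order since no within-tier edges exist to constrain the refinement.
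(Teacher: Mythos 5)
Your proposal is correct and follows essentially the same route as the paper: the paper's explicit formula $t'(i)=\bigl(\sum_{m=1}^{\ell-1}|P_m|\bigr)+|\{j\in P_\ell : j<i\}|+1$ is precisely your block construction with $\beta_m$ chosen to order each tier's vertices by increasing label, and the verification on edges is the same. (Your ``preliminary observation'' is not really needed as a separate step, since the defining condition $t(i)<t(j)$ for an edge with $i>j$ already gives the strict inequality between tiers directly.)
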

\begin{proof}
Let $\T=(T,t)$ be a tiered tree. For $\ell \in [k]$, we 
let $P_\ell := t^{-1}(\ell)$ be the set of vertices at tier $\ell$ in $\T$. By definition, the $P_\ell$ form a partition of $[n]$. We define $t':[n] \rightarrow [n]$ by 
\begin{equation}\label{eqn:def_t'}
t'(i):=\left(\sum_{m=1}^{\ell-1} \vert P_m \vert\right) + \vert \{j \in P_\ell : \, j < i\} \vert+1,
\end{equation} 
where $\ell$ is such that $i \in P_\ell$.
In words, the function $t'$ keeps the relative ordering of tiers, and orders vertices inside each tier in increasing order, as illustrated in Figure~\ref{fig:fully_tiered} below.  We claim that $t'$ is a tiering function for the tree $T$, and that $(T,t')$ is fully tiered.

\begin{figure}[h]
  \centering
  \begin{tikzpicture}[scale=0.7]
  
  \draw (-2,2)--(0,0)--(0,2)--(0,0)--(2,4)--(2,0);
  \draw[dotted] (-3,0)--(3,0);
  \draw[dotted] (-3,2)--(3,2);
  \draw[dotted] (-3,4)--(3,4);
  \draw [fill=white] (0,0) circle [radius=0.3];
  \draw [fill=white] (-2,2) circle [radius=0.3];
  \draw [fill=white] (0,2) circle [radius=0.3];
  \draw [fill=white] (2,4) circle [radius=0.3];
  \draw [fill=white] (2,0) circle [radius=0.3];
  \node at (0,0) {$5$};
  \node at (-2,2) {$4$};
  \node at (0,2) {$1$};
  \node at (2,4) {$2$};
  \node at (2,0) {$3$};
  \node [left] at (-3,0) {$t(\cdot)=1$};
  \node [left] at (-3,2) {$t(\cdot)=2$};
  \node [left] at (-3,4) {$t(\cdot)=3$};
  \node at (0,-1.3) {$\T$};
  
  \begin{scope}[shift={(6,0)}]
  \draw (0,1)--(0,2);
  \draw [out=135,in=225] (0,0) to (0,4);
  \draw [out=30,in=-30] (0,1) to (0,4);
  \draw [out=45,in=-45] (0,1) to (0,3);
  \draw[dotted] (-1,0)--(1,0);
  \draw[dotted] (-1,1)--(1,1);
  \draw[dotted] (-1,2)--(1,2);
  \draw[dotted] (-1,3)--(1,3);
  \draw[dotted] (-1,4)--(1,4);
  \draw [fill=white] (0,0) circle [radius=0.3];
  \draw [fill=white] (0,1) circle [radius=0.3];
  \draw [fill=white] (0,2) circle [radius=0.3];
  \draw [fill=white] (0,3) circle [radius=0.3];
  \draw [fill=white] (0,4) circle [radius=0.3];
  \node at (0,0) {$3$};
  \node at (0,1) {$5$};
  \node at (0,2) {$1$};
  \node at (0,3) {$4$};
  \node at (0,4) {$2$};
  \node [right] at (1,0) {$t(\cdot)=1$};
  \node [right] at (1,1) {$t(\cdot)=2$};
  \node [right] at (1,2) {$t(\cdot)=3$};
  \node [right] at (1,3) {$t(\cdot)=4$};
  \node [right] at (1,4) {$t(\cdot)=5$};
  \node at (0,-1.3) {$\T'$};
  \end{scope}
  \end{tikzpicture}
  \caption{A tiered tree $\T$ (left) and a fully tiered tree $\T'$ (right) with the same underlying tree. The tiers are represented as levels.\label{fig:fully_tiered}}
\end{figure}
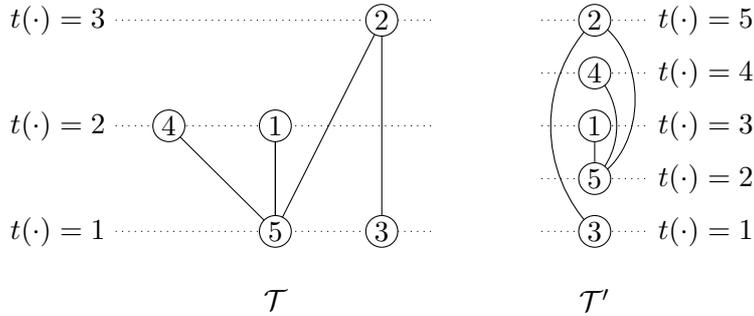

Let $(i,j)$ be an edge of $T$ with $i<j$, and let $\ell,m$ be such that $i \in P_\ell$ and $j \in P_m$. Since $t$ is a tiering function, this implies that $t(i) = \ell > m = t(j)$. Now by construction, Equation~\eqref{eqn:def_t'} implies that $t'(i) > t'(j)$, as desired.

It is clear from Equation~\eqref{eqn:def_t'} that $t'$ assigns a unique positive 
number no greater than $n$ to each $i$, which implies that $t'$ is a bijection, so $(T,t')$ is fully tiered.
\end{proof}

Lemma~\ref{lem:tiered_trees_fully_tiered} states that any tiered tree can be viewed as a fully tiered tree in a sense. As such, from now on, we only consider fully tiered trees, 
and call these simply tiered trees. The following proposition establishes a link between tiered trees and permutation graphs.
\begin{prop}\label{pro:tieredtrees_permgraphs}
Let $T$ be a labeled tree on $[n]$ and $\pi \in \clsn$. Then $T$ is a spanning tree of~$G_{\pi}$ if and only if $(T,\pi^{-1})$ is a tiered tree.
\end{prop}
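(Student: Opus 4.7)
The plan is to observe that, once the two conditions are fully unpacked, they are literally the same statement about $\pi^{-1}$. Accordingly, the proof is essentially a direct translation of definitions, and the main task is to make the dictionary explicit.

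First I would rewrite the edge condition of $G_\pi$ in terms of the inverse permutation. An unordered pair $\{a,b\}$ with $a>b$ is an edge of $G_\pi$ iff the value $a$ appears to the left of the value $b$ in the one-line notation of $\pi$; that is, iff the position of $a$, namely $\pi^{-1}(a)$, is less than $\pi^{-1}(b)$. Thus
\begin{equation*}
\{i,j\}\in E(G_\pi),\ i>j\ \iff\ \pi^{-1}(i)<\pi^{-1}(j).
\end{equation*}

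Next I would recall the tiering condition. Since $\pi^{-1}:[n]\to[n]$ is a bijection, it is automatically surjective, so no number-of-tiers consideration obstructs $\pi^{-1}$ from being a (fully) tiering function. The only thing left to check is the order condition: $(T,\pi^{-1})$ is a tiered tree iff, for every edge $(i,j)$ of $T$ with $i>j$, one has $\pi^{-1}(i)<\pi^{-1}(j)$. This is exactly the edge-membership condition derived in the previous step.

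With these two reformulations in hand, the proposition reduces to a single logical equivalence, quantified over edges of $T$: every edge of $T$ lies in $E(G_\pi)$ if and only if the pair $(T,\pi^{-1})$ satisfies the tiering inequalities. Since $T$ is already assumed to be a labeled tree on $[n]$, being a spanning tree of $G_\pi$ adds exactly the condition $E(T)\subseteq E(G_\pi)$, so the ``iff'' is immediate. I would conclude by remarking that this also shows $(T,\pi^{-1})$ is automatically \emph{fully} tiered, consistent with the convention introduced after Lemma~\ref{lem:tiered_trees_fully_tiered}.

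There is no genuine obstacle here; the only pitfalls are notational: distinguishing positions from values (the paper writes edges as $(\pi_i,\pi_j)$, i.e.\ by values, so the dictionary passes through $\pi^{-1}$), and making sure one reads the tiering convention ``$i>j\Rightarrow t(i)<t(j)$'' in the direction fixed by the preceding remark rather than the reversed convention of \cite{DGGS}.
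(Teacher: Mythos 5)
Your proof is correct and follows essentially the same route as the paper's: both arguments simply translate the edge condition of $G_\pi$ into the inequality $\pi^{-1}(i)<\pi^{-1}(j)$ and observe that this is exactly the tiering condition for $(T,\pi^{-1})$. Your additional remarks on surjectivity and full tieredness are accurate but not substantively different from the paper's argument.
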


\begin{proof}
Suppose that $T$ is a spanning tree of $G_{\pi}$. This means that if $(i,j)$ is an edge of $T$ with $i>j$, then $i$ appears before $j$ in $\pi$, which implies $\pi^{-1}(i)<\pi^{-1}(j)$. This is exactly the condition that $(T,\pi^{-1})$ is a tiered tree. The converse follows in the same way.
\end{proof}

\subsection{The Abelian sandpile model}\label{sec:ASM}

The ASM is a dynamic process on a graph which has attracted considerable attention through the years, and remains a constant source of new and interesting research topics. 

Let $G=(V,E)$ be a finite, connected, loop-free, undirected graph with vertex set  $V=[n]$ for some $n$.  Let $d_i = d_i(G)$ be the degree of the vertex $i$ in $G$.  We will consider the sandpile model on the graph $G$ with a distinguished vertex $s \in [n]$, called the \emph{sink}. We indicate that by writing this as the pair $\gas{G}{s}$.

A \emph{configuration} on $\gas{G}{s}$ is a vector $c=(c_1,\ldots ,c_n) \in \mathbb{Z}_+^n$ that assigns the number $c_i$ to vertex $i$.
We think of $c_i$ as the number of `grains of sand' at the vertex $i$. 
$\Config{G}$ is the set of all configurations on $\gas{G}{s}$.
Let $\alpha_i \in \mathbb{Z}^n$ be the vector with $1$ in the $i$-th position and~$0$ elsewhere. 

We say that a vertex $i$ is \emph{stable} in a configuration $c=(c_1,\ldots ,c_n)\in \Config{G}$ if $c_i < d_i$.  Otherwise it is \emph{unstable}.  A configuration is stable if all its non-sink vertices are stable.
 
Unstable vertices may \emph{topple}.  We define the toppling operator $T_i$ corresponding to the toppling of an unstable vertex $i \in [n]$ in a configuration $c \in \Config{G}$ by 
$$
T_i(c) := c - d_i \alpha_i + \sum_{j: \{i,j\} \in E} \alpha_j,
$$ 
where the sum is over all vertices adjacent to $i$.  In words, when a vertex $i$ topples, it sends one grain of sand along each incident edge to its neighbors. We write $c \ra{i} c'$ to indicate that the vertex $i$ is unstable in $c$ and that $T_i(c)=c'$.

It is possible to show (see for instance \cite[Section 5.2]{Dhar}) that starting from any configuration~$c$ and toppling unstable vertices, one eventually reaches a stable 
configuration $c'$.  Moreover, $c'$ does not depend on the order in which unstable vertices are toppled in this sequence.

\begin{definition}\label{def:rec_states}
A configuration $c \in \Config{G}$ is \emph{recurrent} on $\gas{G}{s}$ if it satisfies the following three conditions:
\begin{enumerate}
\item We have $c_s=d_s$.
\item The configuration $c$ is stable, that is, $c_i < d_i$ for $i \neq s$.
\item\label{defrec3} There exists a sequence $v_1,\ldots,v_n$ with $v_1=s$ and $\{v_1,\ldots,v_n\} = [n]$ such that 
$$c^0 = c \ra{v_1} c^1 \ra{v_2} \cdots \ra{v_n} c^n=c.$$
\end{enumerate}
\end{definition}

In words, the third condition states that there is an ordering of the vertices such that starting from $c$, every vertex can be toppled (exactly) once in this order.  The fact that after making these topplings one returns to the configuration $c$ is guaranteed by the following argument:  On every edge $(i,j)$ of $G$, toppling $i$ sends one grain from $i$ to $j$ while toppling~$j$ sends one grain from $j$ to $i$. Thus, toppling every vertex exactly once leaves the initial configuration unchanged.

Let $\Rec{s}{G}$ be the set of recurrent configuration on a graph $G$ with sink $s$. 
Given $c\in\Rec{s}{G}$, define the \emph{level} of $c$ to be 
$$
\level{c} := \sum\limits_{i \in [n]} c_i - |E|,
$$
where $|E|$ denotes the number of edges of $G$.  From \cite[Thm. 3.5]{Lop} we have that if $G=(V,E)$ is a graph and $c \in \Rec{s}{G}$, then $0\leq\level{c}\leq |E|-|V|+1$.
The level of a recurrent configuration is thus always a non-negative integer. The \emph{level polynomial} of a graph $\gas{G}{s}$ is the generating function of the level statistic over the set of recurrent configurations on that graph:
$$
\Levelpoly{G}{s}{x} := \sum\limits_{c \in \Rec{s}{G}} x^{\level{c}}.
$$

Finally, we define the notion of \emph{canonical toppling}.  Given a recurrent configuration $c \in \Rec{s}{G}$, the canonical toppling of $c$ is the ordered set partition $P=P_0,\ldots,P_k$ of $[n]$ 
where $P_0=\{s\}$ and for $i \geq 1$, $P_i$ is the set of (non-sink) unstable vertices resulting from the toppling of all vertices in $P_0,\ldots,P_{i-1}$.  The fact that this gives a partition of $[n]$ is guaranteed by Condition~\eqref{defrec3} of Definition~\ref{def:rec_states}.  For $c \in \Rec{s}{G}$, we denote by $\CanonTop(c)$ the canonical toppling of $c$.

\begin{example}\label{ex:rec_canontop}
Let $\pi=3421$ and $G_{3421}$ be the corresponding permutation graph, as illustrated in Figure~\ref{fig:ex_canontop}. 
Fix $s=3$ to be the sink vertex (represented as a square), and consider the configuration $c=(1,2,2,1)$ (grains are represented as red dots next to their vertex).
We have $c_3=2=d_3$ and $c_i < d_i$ for $i \neq 3$ so the first two conditions of Definition~\ref{def:rec_states} are satisfied. We show that the third condition is also satisfied, and simultaneously determine the canonical toppling.

We initially topple vertex $3$ in $c^0=c$. This yields the configuration $c^1 = (2,3,1,0)$. In~$c^1$, only vertex $2$ is unstable, so we topple this, reaching $c^2=(3,0,1,2)$. Now both $1$ and $4$ are unstable.
 In this case, we may topple for instance $1$ then $4$, and this will yield the initial configuration $c$. Thus, $c$ is recurrent and $\CanonTop(c) = \{3\},\{2\},\{1,4\}$. Finally, we can compute the level of 
$c$: $\level{c} = 1+2+2+1-5 = 1$.

\begin{figure}[h]
  \centering
  \begin{tikzpicture}[scale=0.5]
\def\graph{
\draw (0,0)--(2,0)--(2,2)--(0,2)--(0,0)--(2,2);
  \draw [fill=white] (0,0) circle [radius=0.5];
  \draw [fill=white] (0,2) circle [radius=0.5];
  \draw [fill=white] (2,2) circle [radius=0.5];
  \draw [thick, fill=white] (1.57,-0.43) rectangle (2.43,0.43);
  \node at (0,0) {$2$};
  \node at (0,2) {$4$};
  \node at (2,0) {$3$};
  \node at (2,2) {$1$};
}
\newcommand\rdot{\setlength\unitlength{1mm}\red{\circle*{1.5}}}
\newcommand\rdotsb{\makebox(0,0){\rdot\,\rdot}}
\newcommand\rdotsc{\makebox(0,0){\rdot\,\rdot\,\rdot}}
\newcommand{\grainsa}[2]{\node at (#1,#2) {\rdot};}
\newcommand{\grainsb}[2]{\node at (#1,#2) {\rdotsb};}
\newcommand{\grainsc}[2]{\node at (#1,#2) {\rdotsc};}
\newcommand\arrow{\node at (-1.5,1.1) {$\rightarrow$};}

\graph
  \grainsa{2.13}{2.8}
  \grainsb{0.13}{-0.78}
  \grainsb{2.13}{-0.78}
  \grainsa{0.13}{2.8}

\begin{scope}[shift={(5,0)}]{ 
\graph
\arrow
  \grainsb{2.13}{2.8}
  \grainsc{0.13}{-0.78}
  \grainsa{0.13}{2.8}
};
\end{scope}

\begin{scope}[shift={(10,0)}]{ 
\arrow
\graph
  \grainsc{2.13}{2.8}
  \grainsa{2.13}{-0.78}
  \grainsb{0.13}{2.8}
};
\end{scope}

\begin{scope}[shift={(15,0)}]{ 
\arrow
\graph
  \grainsa{0.13}{-0.78}
  \grainsb{2.13}{-0.78}
  \grainsc{0.13}{2.8}
};
\end{scope}

\begin{scope}[shift={(20,0)}]{ 
\arrow
\graph
  \grainsa{2.13}{2.8}
  \grainsb{0.13}{-0.78}
  \grainsb{2.13}{-0.78}
  \grainsa{0.13}{2.8}
};
\end{scope}

\end{tikzpicture}
  \caption{The permutation graph $G_{3421}$ with sink $s=3$ and the configuration $c=(1,2,2,1)$, which is shown to be recurrent by the toppling sequence 3,2,1,4. \label{fig:ex_canontop}}
\end{figure}
\end{example}

\section{A bijection from trees to recurrent configurations of the ASM}\label{sec:main_bij}

\subsection{The bijection}\label{sec:main_thm}

Let $\pi \in \clsn$ and $T$ be a spanning tree of $G=G_{\pi}$, that is, such that $(T,\pi^{-1})$ is a tiered tree by Proposition~\ref{pro:tieredtrees_permgraphs}. 
Let $s \in [n]$ be a distinguished vertex of $G$. We view the tree $T$ as being rooted at $s$. Given $i \in [n]$, we define the \emph{height} of $i$ in $T$ to be its distance to the root $s$, and denote it $h(i)$. If $i \neq s$, the \emph{parent} of $i$ is the next vertex encountered on the unique path from $i$ to $s$, and we denote this $p(i)$. For  $k \geq 1$, we define $T^{(k)} := \{ i \in n: \, h(i)=k\}$ to be the set of vertices at height $k$ 
in $T$, with analogous definitions for $T^{(>k)}$, $T^{(\geq k)}$, etc. Finally, we let $N_G(i)$ be the set of neighbors of $i$ in the graph $G$.

For $i \in [n]$, we let:
\begin{eqnarray}
\lambda_i = \lambda_i(T) & := & \left\vert N_G(i) \cap T^{\left(>h(i)\right)} \right\vert \label{eq:def_lambda} ,\\
\mu_i = \mu_i(T) & := & \left\vert N_G(i) \cap T^{\left(h(i)\right)} \right\vert \label{eq:def_mu} ,\\
\nu_i = \nu_i(T) & := & \left\vert N_G(i) \cap T^{\left(h(i) - 1\right)} \cap [0,p(i)-1] \right\vert \label{eq:def_nu} .
\end{eqnarray}

In words, $\lambda_i$ is the set of neighbors of $i$ in $G$ at height strictly greater than $i$ in $T$, $\mu_i$ is the set of neighbors of $i$ in $G$ at the same height as $i$ in $T$, and $\nu_i$ is the set of neighbors of~$i$ in $G$ at height one less than $i$, and whose labels are strictly smaller than the parent of $i$. Although it would be  natural to combine $\lambda_i$ and $\mu_i$ into one number, this definition facilitates our proof of the following theorem. Note that these definitions all depend on the choice of a distinguished vertex $s$, though for lightness of notation we do not make this explicit.

\begin{theorem}\label{thm:main_result}
Let $\pi \in \clsn$ be a permutation and $s \in [n]$ a distinguished vertex of $G=G_{\pi}$.\linebreak Given a spanning tree $T$ of $G$ we define a configuration $c(T)=(c_1(T),\ldots,c_n(T)) \in \Config{G}$ by
$$
c_i(T) := \lambda_i(T) + \mu_i(T) + \nu_i(T).
$$
Then the map $\ttc : T \mapsto c(T)$ is a bijection from the set of spanning trees of $G$ to $\Rec{s}{G}$.

Moreover, for any spanning tree $T$, we have $ \level{c(T)} = \sum\limits_{i=1}^n \left( \frac12 \mu_i(T) + \nu_i(T) \right) $, and 
$$
\CanonTop \left( c(T) \right) = T^{(0)},T^{(1)},\ldots.
$$ 
That is, the canonical toppling of $c(T)$ is given by the breadth-first search of $T$.
\end{theorem}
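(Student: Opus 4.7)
The plan is to proceed in four steps: first verify that $c(T)$ is stable with $c_s = d_s$; then show by induction that the BFS partition $T^{(0)}, T^{(1)}, \ldots$ is the canonical toppling of $c(T)$, which in particular certifies $c(T) \in \Rec{s}{G}$; then exhibit an explicit inverse of $\ttc$; and finally obtain the level formula by a short double-counting argument. For stability, decomposing $d_i$ according to the heights of the $G$-neighbors of $i$ in $T$ gives
$$
d_i \;=\; \lambda_i + \mu_i + |N_G(i) \cap T^{(h(i)-1)}| + |N_G(i) \cap T^{(<h(i)-1)}|,
$$
and since $p(i) \in N_G(i) \cap T^{(h(i)-1)}$ is excluded from $\nu_i$, one has the strict slack $\nu_i \leq |N_G(i) \cap T^{(h(i)-1)}| - 1$, and hence $c_i(T) < d_i$ for every non-sink $i$; likewise $c_s = d_s$ because every $G$-neighbor of the root lies in $T^{(>0)}$.

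For the canonical toppling claim I would prove by induction on $k$ that after the vertices of $T^{(0)}, \ldots, T^{(k-1)}$ have all toppled, each $i \in T^{(k)}$ has received exactly $|N_G(i) \cap T^{(<k)}|$ new grains, so its count rises to $c_i(T) + |N_G(i) \cap T^{(<k)}| = d_i + \nu_i \geq d_i$, making it unstable; conversely, for any $j$ with $h(j) > k$ the inclusion $T^{(<k)} \subseteq T^{(<h(j)-1)}$ combined with the same slack $\nu_j \leq |N_G(j) \cap T^{(h(j)-1)}| - 1$ keeps $c_j + |N_G(j) \cap T^{(<k)}| \leq d_j - 1$, so $j$ stays stable. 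This simultaneously verifies condition~\eqref{defrec3} of Definition~\ref{def:rec_states} and identifies $\CanonTop(c(T)) = T^{(0)}, T^{(1)}, \ldots$.

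To invert $\ttc$, I would start from a recurrent $c \in \Rec{s}{G}$, read the tiers $T^{(k)} := P_k$ off its canonical toppling, and for each non-sink vertex $i$ set $\nu_i := c_i - \lambda_i - \mu_i$ (with $\lambda_i, \mu_i$ computed from these tiers) and define $p(i)$ to be the $(\nu_i+1)$-th smallest element of $N_G(i) \cap T^{(h(i)-1)}$. The canonical toppling forces $0 \leq \nu_i \leq |N_G(i) \cap T^{(h(i)-1)}| - 1$---the lower bound because $i$ becomes unstable at step $h(i)$, the upper bound because $i$ was stable one step earlier---so $p(i)$ is a well-defined $G$-neighbor of $i$ at strictly lower height, and the $n-1$ edges $\{i, p(i)\}$ form a spanning tree $T$ satisfying $\ttc(T) = c$ by construction. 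The level formula now drops out: $\sum_i \lambda_i$ counts each edge of $G$ whose endpoints have distinct heights once (from the lower-height endpoint), while $\sum_i \mu_i$ counts each same-height edge twice, so $\sum_i (\lambda_i + \tfrac12 \mu_i) = |E|$ and therefore $\level{c(T)} = \sum_i c_i - |E| = \sum_i (\tfrac12 \mu_i + \nu_i)$. The main obstacle is the inversion step: specifically, verifying that $c_i - \lambda_i - \mu_i$ truly lies in the admissible range for every recurrent $c$ and that the resulting parent assignments collectively produce a spanning tree whose own BFS ordering reproduces the canonical toppling we began with. Once these compatibility checks are dispatched, bijectivity is automatic because $\ttc$ and the proposed inverse undo each other on both sides by construction.
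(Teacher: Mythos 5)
Your proposal follows essentially the same route as the paper's proof: the same stability decomposition of $d_i$ with the slack coming from $p(i)$ being excluded from $\nu_i$, the same induction identifying the BFS tiers with the canonical toppling, the same double-counting of $\sum_i\lambda_i$ and $\sum_i\mu_i$ for the level formula, and the same inverse built by ranking $i$'s neighbours in $N_G(i)\cap T^{(h(i)-1)}$ by the residue $c_i-\lambda_i-\mu_i$. (Your ``$(\nu_i+1)$-th \emph{smallest}'' is in fact the reading consistent with the definition of $\nu_i$ as counting neighbours with labels below $p(i)$.) Two points deserve attention. First, your induction for the canonical toppling only checks vertices at height $k$ (unstable) and at height $>k$ (stable); to conclude that the unstable set is \emph{exactly} $T^{(k)}$ you must also check that a non-sink vertex at height $<k$, which has already toppled, does not become unstable again --- this is a one-line argument ($c'_i=c_i+\lvert N_G(i)\cap T^{(<k)}\rvert-d_i<\lvert N_G(i)\cap T^{(<k)}\rvert\le d_i$ using stability of $c$), but it is needed and you omit it. Second, the ``main obstacle'' you flag (that the tree built from an arbitrary recurrent $c$ has BFS tiers equal to the $P_j$ and maps back to $c$ under $\ttc$) is genuinely where the remaining content lies if you insist on verifying both compositions; it does go through (each vertex of $P_j$ gets a parent in $P_{j-1}$, so heights are forced, and the rank chosen for $p(i)$ makes $\nu_i$ recover $c_i-\lambda_i-\mu_i$), but note that the paper avoids this direction entirely by verifying only $\ctt(\ttc(T))=T$ and then invoking the standard fact that $\lvert\Rec{s}{G}\rvert$ equals the number of spanning trees of $G$. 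Your two-sided verification is more self-contained but must actually be carried out rather than declared automatic.
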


Before we prove this result, let us examine one example in depth.

\begin{example}
Let $\pi = 514362$. The associated permutation graph $G=G_{\pi}$ is represented on the left of Figure~\ref{fig:ex_bij}. We take $s=3$ to be the sink. Let $T$ be the spanning tree of $G$ on the right of Figure~\ref{fig:ex_bij}. We represent $T$ as a tree rooted at the distinguished vertex $3$, and compute the corresponding configuration $c(T)$: 

For $i=1$, there are no vertices at height greater than $h(1)=2$ in $T$, and none of the other two vertices at height $2$ are neighbors of $1$ in $G$, so that $\lambda_1=\mu_1=0$. In fact, the parent of $1$ in $T$ is its only neighbor in $G$, so that we also have $\nu_1=0$, and thus $c_1=0$. Now consider the vertex $i=2$. In $T$
there are three vertices at height greater than $h(2)=1$, which are $1,4,6$. Of these, $4$ and~$6$ are neighbors of $2$ in G, so that $\lambda_2=2$. Similarly, $5$ is the other vertex at height $1$ in $T$, and is a neighbor of $2$ in $G$, so that $\mu_2=1$. Finally, the parent of $2$ in $T$ is the only vertex at height $1-1=0$, so $\nu_2=0$. Thus, $c_2=2+1+0=3$.

Similarly, $c_3=3+0+0=3$. Now for $i=4$, we have $\lambda_4=0$ (there are no vertices at 
a greater height in $T$), $\mu_4=0$ (neither $1$ nor $6$ are neighbors of $4$ in $G$). But both $2$ and $5$ are neighbors of $4$ in $G$ with height equal to $h(4)-1$ in $T$, and the parent of $4$ in $T$ is $5$, so that $\nu_4=1$, and thus $c_4=0+0+1=1$. Finally, we can see that $c_5=2+1+0=3$, and $c_6=0+0+0=0$. Thus, we have $c(T)=(0,3,3,1,3,0)$.

We check that $c(T)$ is recurrent using Definition~\ref{def:rec_states}, and also establish that the canonical toppling of $c(T)$ is given by the breadth-first search (BFS) of $T$. The vertex degree sequence of $G$ is given by $(1,4,3,3,4,1)$, and the BFS of $T$ is $3-25-146$ with dashes separating the sets of vertices at different heights. Start from the configuration $c=c(T)=(0,3,3,1,3,0)$. We have $c_3=d_3$ and $c_j<d_j$ for $j \neq 3$, as desired. Therefore we initially topple vertex $3$. This leads to the configuration $(0,4,0,2,4,0)$. In this configuration, vertices $2$ and $5$ are unstable. We topple these, which leads to the configuration $(1,1,2,4,1,1)$. In this configuration, vertices $1$,$4$ and~$6$ are unstable. We topple these, which leads back to the initial configuration $(0,3,3,1,3,0)$. Thus, by Definition~\ref{def:rec_states} the configuration $c(T)$ is recurrent, and we have moreover shown that $\CanonTop\left(c^T \right) = 3-25-146$, which is exactly the BFS of $T$.

Finally, the graph $G$ has $8$ edges, so that on the one hand 
$$\level{c(T)} = (0+3+3+1+3+0) - 8 = 10 - 8 =2.$$ 
On the other hand, we have
$$\sum\limits_{i=1}^6 \left( \frac12 \mu_i + \nu_i \right) = \frac12 (0+1+0+0+1+0) + (0+0+0+1+0+0) = 1+1 = 2,$$ 
which gives the desired result.

\begin{figure}[h]
\centering
\begin{tikzpicture}[scale=0.5]
  \newcommand\vertex[2]{\draw [fill=white] (#1,#2) circle [radius=0.5];}
  \newcommand\rdot{\setlength\unitlength{1mm}\red{\circle*{1.5}}}
  \newcommand\rdotsb{\makebox(0,0){\rdot\,\rdot}}
  \newcommand\rdotsc{\makebox(0,0){\rdot\,\rdot\,\rdot}}
  \newcommand{\grainsa}[2]{\node at (#1,#2) {\rdot};}
  \newcommand{\grainsb}[2]{\node at (#1,#2) {\rdotsb};}
  \newcommand{\grainsc}[2]{\node at (#1,#2) {\rdotsc};}

  \draw (0,0)--(4,2);
  \draw (-1,2)--(3,0);
  \draw (-1,2)--(4,2)--(3,4)--(0,4)--(-1,2)--(3,4);
  \draw (0,4)--(4,2);
   \vertex{0}{0};
   \vertex{-1}{2};
   \vertex{0}{4};
   \vertex{4}{2};
\draw [fill=white] (2.58,3.58) rectangle ++(0.84,0.84);
   \vertex{3}{0};

  \node at (0,0) {$6$}; 
  \node at (-1,2) {$5$}; \grainsc{-2.2}{2}
  \node at (0,4) {$4$}; \grainsa{-0.8}{4}
  \node at (3,4) {$3$}; \grainsc{4.5}{4}
  \node at (4,2) {$2$}; \grainsc{5.5}{2}
  \node at (3,0) {$1$};
  \node at (1.5,-1.8) {$G_{514362}$};
  
  \begin{scope}[shift={(13,0)}]
  \draw (0,0)--(-2,2)--(-2,4);
  \draw (0,0)--(1,2)--(0,4)--(1,2)--(2,4);
  \draw [fill=white] (-0.42,-0.42) rectangle (0.42,0.42);
  \draw [fill=white] (-2,2) circle [radius=0.5];
  \draw [fill=white] (-2,4) circle [radius=0.5];
  \draw [fill=white] (1,2) circle [radius=0.5];
  \draw [fill=white] (0,4) circle [radius=0.5];
  \draw [fill=white] (2,4) circle [radius=0.5];

  \node at (0,0) {$3$};
  \node at (-2,2) {$2$};
  \node at (-2,4) {$6$};
  \node at (1,2) {$5$};
  \node at (0,4) {$1$};
  \node at (2,4) {$4$};
  \node at (0,-1.3) {$T$};
  \end{scope}
  
\end{tikzpicture}
\caption{The graph $G$ associated with the permutation $\pi = 514362$ (left) and a spanning tree $T$ of $G$ represented as rooted at the distinguished vertex~$3$ (right). Configuration on $G$ corresponding to $T$ shown with red dots.\label{fig:ex_bij}}
\end{figure}
\end{example}

\begin{proof}[Proof of Theorem \ref{thm:main_result}]
Let $T$ be a spanning tree of $T$, and $c:=c(T)$ the corresponding configuration. We first show that $c$ is recurrent, and that $\CanonTop(c) = T^{(0)},T^{(1)},\ldots$, using Definition~\ref{def:rec_states}.
\begin{enumerate}
\item The sink $s$ is the unique vertex at height $0$ in $T$, so that $\lambda_s = \vert N_G(s) \vert = d_s$, and $\mu_s = \nu_s = 0$. Thus $c_s = \lambda_s + \mu_s + \nu_s = d_s$ as desired.
\item For $i \neq s$, we see that $\lambda_i$, $\mu_i$ and $\nu_i$ all count distinct subsets of $N_G(i)$. Moreover, $p(i)$ is a neighbor of $i$ in $G$ which is counted in none of these three subsets. Thus, $c_i < \vert N_G(i) \vert = d_i$, and so the configuration $c$ is stable.
\item We now show that, starting from the configuration $c$, for any $k \geq 1$, if we topple the vertices of $T^{(0)},\ldots,T^{(k-1)}$, then the set of non-sink unstable vertices is exactly $T^{(k)}$. Combined with the above, this shows that $c$ is recurrent, 
and that $\CanonTop(c) = T^{(0)},T^{(1)},\ldots$. Let $k \geq 1$ and let $c'$ be the configuration reached from the initial configuration $c$ after toppling the vertices of $T^{(0)},\ldots,T^{(k-1)}$. We need to show that $c'_i \geq d_i$ if $i \in T^{(k)}$, and that $c'_i < d_i$ if $i \notin T^{(k)} \cup \{s\}$.
\begin{itemize}
\item Let $i \in T^{(k)}$. We have $c'_i = c_i + \left\vert N_G(i) \cap T^{(<k)} \right\vert$, since the second term of the sum is the number of grains vertex $i$ receives through toppling $T^{(0)},\ldots,T^{(k-1)}$. Thus
\begin{align*}
c'_i & = \lambda_i + \mu_i + \nu_i + \left\vert N_G(i) \cap T^{(<k)} \right\vert \\
 & = \left\vert N_G(i) \cap T^{(>k)} \right\vert + \left\vert N_G(i) \cap T^{(k)} \right\vert + \nu_i + \left\vert N_G(i) \cap T^{(<k)} \right\vert \\
 & = d_i + \nu_i \geq d_i,
\end{align*} 
as desired.
\item Let $i \in T^{(>k)}$. Write $ \ell = h(i) > k$. As above, we have 
\begin{align*}
c'_i & = c_i + \left\vert N_G(i) \cap T^{(<k)} \right\vert \\
 & = \left\vert N_G(i) \cap T^{(> \ell)} \right\vert + \left\vert N_G(i) \cap T^{(\ell)} \right\vert + \left\vert N_G(i) \cap T^{(<k)} \right\vert + \nu_i. \\
\end{align*} 
Now $\nu_i$ counts a subset of neighbors of $i$ in $G$ which are at height $\ell-1$ in $T$, and since $p(i)$ is not counted in $\nu_i$, this is a strict subset.
Thus $c'_i < \left\vert N_G(i) \cap T^{(> \ell)} \right\vert + \left\vert N_G(i) \cap T^{(\ell)} \right\vert + \left\vert N_G(i) \cap T^{(<k)} \right\vert + \left\vert N_G(i) \cap T^{(\ell -1)} \right\vert$, and since $\ell-1 \geq k$, it follows that $c' <d_i$ as desired.
\item Finally, let $i \in T^{(<k)}$, with $i \neq s$. The vertex $i$ has been toppled in $T^{(0)},\ldots,T^{(k-1)}$, so that $c'_i = c_i + \left\vert N_G(i) \cap T^{(<k)} \right\vert - d_i$. But we have already shown that $c$ is stable, so $c_i <d_i$, and thus $c'_i < \left\vert N_G(i) \cap T^{(<k)} \right\vert \leq \vert N_G(i) \vert = d_i$, as desired.
\end{itemize}
\end{enumerate}
This completes the first part of the proof, namely that $c$ is recurrent, and that $\CanonTop(c) = T^{(0)},T^{(1)},\ldots$.

We now show that $ \level{c} = \sum\limits_{i=1}^n \left( \frac12 \mu_i + \nu_i \right)$. We have
\begin{align*}
\level{c} & = \sum\limits_{i=1}^n \left( \lambda_i + \mu_i + \nu_i \right) - \vert E \vert \\
 & =  \sum\limits_{i=1}^n \left( \lambda_i + \frac12 \mu_i \right) - \vert E \vert + \sum\limits_{i=1}^n \left( \frac12 \mu_i + \nu_i \right).
\end{align*}
Now, the sum $\sum_{i=1}^n \lambda_i$ counts all pairs of vertices $(i,j)$ such that $j \in N_G(i)$ and $h(i) < h(j)$. Thus every edge $(i,j)$ of $G$ with $h(i) \neq h(j)$ is counted exactly once in that sum. Moreover, the sum $\sum_{i=1}^n \mu_i$ counts all pairs of vertices $(i,j)$ such that $j \in N_G(i)$ and $h(i) = h(j)$. Thus, in the sum $\sum_{i=1}^n \mu_i$, every edge $(i,j)$ of $G$ with $h(i) = h(j)$ is counted twice. Therefore we have $\sum_{i=1}^n \left( \lambda_i + \frac12 \mu_i \right) = \vert E \vert $, and thus $ \level{c} = \sum_{i=1}^n \left( \frac12 \mu_i + \nu_i \right)$, as desired.


It remains to show that $\ttc$ is a bijection. To do this, we exhibit its inverse. Let $c \in \Rec{s}{G}$, and write $\CanonTop(c) = P_0,P_1,\ldots$ for the canonical toppling of $c$, with $P_0 = \{s\}$. We construct a spanning tree $T = T(P)$ of $G$ from this as follows. The levels of $T$ are such that for all $j \geq 0$ we have $T^{(j)} = P_j$. To define $T$ it is then sufficient to define a parent map $p:[n] \setminus \{s\} \rightarrow [n]$ such that for any $j \geq 1$ and $i \in P_j$, we have $p(i) \in N_G(i) \cap P_{j-1}$. That this intersection is nonempty follows from the definition of the canonical toppling, since for $i$ to topple in $P_j$ it must have received some grains through the toppling of $P_{j-1}$.

Fix some $j \geq 1$ and $i \in P_j$. The definition of the canonical toppling implies the following property: Starting from $c$, the vertex $i$ is stable after toppling the vertices from $P_0,\ldots,P_{j-2}$, and becomes unstable after toppling those of $P_{j-1}$. For $k \geq 0$, let $N_G^{(<k)}(i)$ be the set of neighbors of $i$ in $G$ which are in $P_0 \cup \cdots \cup P_{k-1}$. The previous property can then be summarized in the following two inequalities:
$$ c_i + \left\vert N_G^{(<j-1)}(i) \right\vert < d_i, $$
$$ c_i + \left\vert N_G^{(<j)}(i) \right\vert \geq d_i. $$
Letting $r_i := c_i + \left\vert N_G^{(<j)}(i) \right\vert - d_i$, this is equivalent to
$$ 0 \leq r_i < \left\vert N_G(i) \cap P_{j-1} \right\vert.$$
We then define $p(i)$ to be the $(r_i+1)$-th largest element of $ N_G(i) \cap P_{j-1} $, 
and let $T=\ctt(c)$ be the spanning tree of $G$ resulting from this construction. We now show that $\ctt$ is the inverse of $\ttc$.

First, let $T$ be a spanning tree of $G$ and set $T':= \ctt(\ttc(T))$. By construction, 
we have $T^{(k)} = T'^{(k)}$ for all $k \geq 0$, so we only need to show that for any $i \in [n] \setminus \{s\}$, we have $p^T(i) = p^{T'}(i)$. Set $c:=\ttc(T)$ and let $i \in T^{(j)} \left( = T'^{(j)} \right) $ for some $ j \geq 1$. By definition, $p^{T'}(i)$ is the $(r_i+1)$-th largest element of $ N_G(i) \cap T^{(j-1)} $, where $r_i := c_i + \left\vert N_G^{(<j)}(i) \right\vert - d_i = c_i - \left\vert N_G(i) \cap T^{( \geq j)} \right\vert$. But by definition of $\ttc$, this means that $r_i = \nu_i(T) = \left\vert N_G(i) \cap T^{(j-1)} \cap [0,p(i) - 1] \right\vert$, and thus $p^T(i)$ is also the $(r_i+1)$-th largest element of $ N_G(i) \cap T^{(j-1)} $, so that $p^T(i) = p^{T'}(i)$ as desired. This shows that $\ctt(\ttc(T)) = T$ for any spanning tree $T$. Since it is well known that the number of recurrent configurations for the ASM on a graph $G$ is equal to a number of spanning trees of $G$ (see for instance~\cite[Section 3.2]{Red}), this is sufficient to conclude that $\ttc$ is a bijection, with $\ctt$ its inverse. 
\end{proof}

\begin{remark}\label{rem:bij_tiered_trees_rec_configs}
Theorem~\ref{thm:main_result}, combined with Proposition~\ref{pro:tieredtrees_permgraphs}, provides a bijection between the set of (fully) tiered trees on $[n]$ and the (disjoint) union of the sets of recurrent configurations for the ASM over all (connected) permutation graphs on $n$ vertices. In particular, we have that the number of (fully) tiered trees on $[n]$ is given by the sum $ \sum_{\pi} \vert \Rec{s}{G_{\pi}} \vert$, where the sum is over all indecomposable permutations of length $n$, and $s$ is some fixed (but arbitrary) sink in $[n]$.
\end{remark}

\subsection{A Tutte-descriptive activity}\label{sec:Tutte}

Let $\pi \in \clsn$ be a permutation, $G=G_{\pi}$ its permutation graph, and $s \in [n]$ a distinguished vertex of $G$. Given a spanning tree $T$, we interpret the level statistic of the corresponding recurrent configuration $c(T) \in \Rec{s}{G}$ as the external 
activity of the spanning tree $T$.  

\begin{definition}\label{def:activity}
Let $G=(V,E)$ be a graph, $T$ a spanning tree of $G$, and $\prec$ a total order on the edges $E$ of $G$. An edge $e \notin T$ is said to be \emph{externally active} if it is the maximal edge for $\prec$ in the unique cycle contained in $T \cup \{e\}$. An edge $e \in T$ is said to be \emph{internally active} if it is the maximal edge for $\prec$ in the unique \emph{cocycle} contained in $T \setminus \{e\}$, that is, in the set of edges connecting the two connected components of $T \setminus \{e\}$. The external, resp. internal, activity of $T$ is its number of externally, resp. internally, active edges, and is denoted by $\Ext(T)$, resp. $\Int(T)$.
\end{definition}

In light of this, Theorem~\ref{thm:main_result} can be interpreted as a bijection between recurrent configurations and spanning trees of a graph, mapping the level of a configuration to the external activity of the corresponding tree. This bijection is different from those already existing in the literature, such as~\cite{Ber,CLB}.

Recall that the Tutte polynomial of a (connected) graph $G=(V,E)$ is defined by
$$
\tut_G(x,y) := \sum\limits_{S \subseteq E} (x-1)^{\mathrm{cc}(S) -1} (y-1)^{\mathrm{cc}(S) + \vert S \vert - \vert V \vert},
$$
where for $S \subseteq E$, $\mathrm{cc}(S)$ denotes the number of connected components of the subgraph $(V,S)$. The level and Tutte polynomial of a graph are related by the following well-known result.

\begin{prop}\label{pro:Tutte_level}
Let $\gas{G}{s}$ be a graph. Then we have
$\Levelpoly{G}{s}{x} = \tut_G(1,x).$ In particular, the level polynomial is independent of the choice of sink.
\end{prop}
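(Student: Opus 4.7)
The plan is to leverage Theorem~\ref{thm:main_result} and the classical activity expansion of the Tutte polynomial. The bijection $\ttc: T \mapsto c(T)$ converts $\Levelpoly{G}{s}{x}$ into $\sum_T x^{\level{c(T)}}$, where the sum is over spanning trees of $G$, and $\level{c(T)} = \sum_i \left( \frac12 \mu_i(T) + \nu_i(T)\right)$. Since the classical activity theorem gives $\tut_G(1,x) = \sum_T x^{\Ext(T)}$ for \emph{any} total order on $E$, it will be enough to exhibit one total order $\prec$ for which $\level{c(T)} = \Ext(T)$ tree by tree.

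To construct $\prec$, I will use the heights in $T$ rooted at $s$. For an edge $e$ with endpoints $u, v$ set $M(e) := \max(h(u), h(v))$ and $m(e) := \min(h(u), h(v))$, and order edges primarily by $(M(e), m(e))$ in lexicographic order. For ties I use a label-based tiebreaker: among edges sharing a common upper endpoint $i$ and type $(h, h-1)$, declare $(i, a) \succ (i, b)$ when $a < b$, and extend to a total order arbitrarily in the remaining cases.

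Next I classify non-tree edges $e = (i,j)$ by the gap $h(i) - h(j)$, assuming $h(i) \geq h(j)$ without loss of generality. Because every tree edge joins vertices at consecutive heights, the path from $i$ to $j$ in $T$ — and hence the fundamental cycle of $e$ — meets height $h(i)$ only at the edge $(i, p(i))$. Three cases then arise: if $h(i) = h(j)$, the edge $e$ has type $(h(i), h(i))$, strictly larger than the type of every other edge in the cycle, so $e$ is externally active; if $h(i) = h(j)+1$, the only edge of the same type in the cycle is $(i, p(i))$, and the tiebreaker makes $e$ externally active precisely when $j < p(i)$; if $h(i) \geq h(j)+2$, then $(i, p(i))$ has strictly larger type than $e$, so $e$ is not externally active.

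Summing over non-tree edges, $\Ext(T)$ equals the number of non-tree edges with same-height endpoints plus the number of non-tree edges $(i,j)$ with $h(j) = h(i)-1$ and $j < p(i)$. Since no two vertices at equal heights can be joined by a tree edge, the first count is $\sum_i \frac12 \mu_i(T)$ and the second is $\sum_i \nu_i(T)$; together they give $\level{c(T)}$ by Theorem~\ref{thm:main_result}. Hence $\Levelpoly{G}{s}{x} = \sum_T x^{\Ext(T)} = \tut_G(1,x)$, and sink-independence is immediate because the right-hand side does not mention~$s$. The main obstacle will be calibrating the tiebreaker so that the gap-$1$ case produces $\nu_i$ with the correct orientation; once the order is chosen the casework is essentially forced by the fact that tree edges only connect consecutive heights.
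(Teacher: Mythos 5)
Your tree-by-tree computation is essentially the content of the paper's Theorem~\ref{thm:ext_activity_level} (together with Lemmas~\ref{lem:level} and~\ref{lem:ext_active_edges}): the casework on the height gap of a non-tree edge $(i,j)$ is sound, the same-height edges contribute $\sum_i \tfrac12\mu_i(T)$, and your label tiebreaker in the gap-one case does pick out exactly $\sum_i\nu_i(T)$. The gap is in the first step, where you invoke ``the classical activity theorem \ldots for any total order on $E$.'' That theorem requires a \emph{single fixed} total order on $E$, used for every spanning tree simultaneously. The order you construct is built from the heights $h(\cdot)$ of vertices in $T$ rooted at $s$, and these heights change from one spanning tree to another; so you have not exhibited one order $\prec$ but a family $T\mapsto\prec_T$ of orders, one per tree. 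For an arbitrary tree-dependent order map, the identity $\sum_T x^{\Ext_{\prec_T}(T)}=\tut_G(1,x)$ is simply false in general, and proving it for this particular order map is a genuinely separate task. This is exactly the point the paper handles with Theorem~\ref{thm:Tutte_descriptive}, by appealing to Courtiel's notion of Tutte-descriptive (tree-compatible) order maps and his Theorem~5.3 in~\cite{Cour}. Without that step, or a direct verification that your height-based order map is tree-compatible, the conclusion does not follow from what you have written.

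Two smaller remarks. First, the paper does not reprove Proposition~\ref{pro:Tutte_level} in full generality: it is L\'opez's theorem, cited from~\cite{Lop}, and the new content is a bijective proof in the special case of permutation graphs. Your argument, resting on Theorem~\ref{thm:main_result}, is likewise confined to permutation graphs, so even once repaired it proves the proposition only for that class (which is all the paper claims for its own argument, but is less than the statement as written). Second, your claim that the fundamental cycle of $e=(i,j)$ ``meets height $h(i)$ only at the edge $(i,p(i))$'' is not quite right when $h(i)=h(j)$, since $(j,p(j))$ also has maximal endpoint height $h(i)$; your conclusion survives because $e$ then has strictly larger type than both, but the statement should be corrected.
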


This result was initially proved by L\'opez \cite{Lop}, following a conjecture by Biggs. 
Subsequent combinatorial (bijective) proofs have been given, for instance, by Cori and Le Borgne~\cite{CLB}, and Bernardi~\cite{Ber}. The aim of the remainder of this section is to show that Theorem~\ref{thm:main_result} gives a new bijective proof in the case of permutation graphs.

Let $G=G_{\pi}$ be a permutation graph with sink $s$. We first show that for a spanning tree $T$ of $G$, we can construct an order $\prec_T$ on the edges of $G$ such that $\level{\ttc(T)} = \Ext(T)$. We then show that the order map $T \mapsto \prec_T$ is \emph{Tutte-descriptive} in the sense introduced by Courtiel in \cite{Cour}.
Let $T$ be a spanning tree of $G$. As usual, we root $T$ at $s$. The following algorithm defines an order $\prec_T$ of $E$.

\begin{algorithm}\label{algo:<_T}
\begin{enumerate}
\item Initially, set $k=0$ and all vertices as unvisited.
\item Let $v$ be the largest unvisited vertex at height $k$ in $T$. If no such vertex exists, increase $k$ by $1$ and repeat this step.
\item Let $S$ be the set of edges $(v,w)$ of $G$ such that $w$ is unvisited. Order elements of $S$ by $(v,w) \prec_T (v,w')$ if $w>w'$, and such that all edges in $S$ are greater (in $\prec_T$) than all previously ordered edges.
\item Mark $v$ as visited. If all edges of $G$ have been ordered then terminate, otherwise return to Step (2).
\end{enumerate}
\end{algorithm}

This order $\prec_T$ is similar to that introduced by Gessel and Sagan in \cite{GS}, though where theirs is based on a depth-first search of $T$ ours is based on a breadth-first search, since vertices are visited in that order. 

\begin{example}\label{ex:<_T}
Let $\pi = 514362$ so that $G_{\pi}$ is the graph on the left in Figure~\ref{fig:ex_bij}, and consider the spanning tree $T$ on the right in that figure.
We initially set $k=0$ and $v=3$ which is the only vertex at height $0$ in $T$. Proceeding to Step~(3), we have $S = \{(3,2),(3,4),(3,5)\}$. We order these $(3,5) \prec (3,4) \prec (3,2)$. We then mark $3$ as visited, and return to Step~(2). Since there are no unvisited vertices left at height $0$, we move to height $1$.

We set $v=5$, which is the largest vertex at height $1$ (neither vertex has been visited yet). Now $S=\{(5,1),(5,2),(5,4)\}$ since $3$ has already been visited, and we order these $(5,4) \prec (5,2) \prec (5,1)$, with $(3,2) \prec (5,4)$. We then mark $5$ as visited, return to Step~(2), and set $v=2$. We have $S=\{ (2,4),(2,6) \}$, which we order $(2,6) \prec (2,4)$, with $(5,1) \prec (2,6)$. We then mark $2$ as visited, and we now see that all edges have been ordered, so the algorithm terminates, and yields the order $(3,5) \prec (3,4) \prec (3,2) \prec (5,4) \prec (5,2) \prec (5,1) \prec (2,6) \prec (2,4)$.
\end{example}

\begin{theorem}\label{thm:ext_activity_level}
Let $G=G_{\pi}$ be a permutation graph with sink $s$. Then for any spanning tree $T$ of $G$, we have
$$ \Ext(T) = \level{\ttc(T)}, $$
where $\Ext(T)$ is the number of externally active edges for the order $\prec_T$ defined by Algorithm~\ref{algo:<_T}.
\end{theorem}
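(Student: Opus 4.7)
The approach is to exploit the formula $\level{\ttc(T)} = \sum_i \left(\tfrac12 \mu_i(T) + \nu_i(T)\right)$ from Theorem~\ref{thm:main_result}, and show directly that the right-hand side counts the externally active edges of $T$ with respect to $\prec_T$.

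First I would give a compact description of $\prec_T$: every edge $e \in E(G)$ is ordered in Step~(3) of Algorithm~\ref{algo:<_T} during the processing of a uniquely determined endpoint $\phi(e)$, namely the endpoint of $e$ with the smaller height in $T$, or, if the two endpoints share a height, the endpoint with the larger label. The algorithm then translates into the lexicographic rule: $e_1 \prec_T e_2$ iff $\phi(e_1)$ is visited strictly before $\phi(e_2)$ in the BFS of $T$, or $\phi(e_1) = \phi(e_2)$ and the other endpoint of $e_1$ exceeds that of $e_2$.

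Next I would classify each non-tree edge $e = (u,w)$ by the heights of its endpoints, assuming without loss of generality $h(u) \geq h(w)$. There are three cases. If $h(u) = h(w)$, then $\phi(e)$ sits at height $h(u)$, while every edge of the tree path between $u$ and $w$ has its $\phi$-vertex at height at most $h(u)-1$; hence $e$ is the maximum of its fundamental cycle and is externally active. If $h(u) = h(w) + 1$ (with $w \neq p(u)$ since $e$ is non-tree), then the unique path edge whose $\phi$-vertex lies at height $h(w)$ is $(u, p(u))$, and since $w$ and $p(u)$ are both processed at height $h(w)$ in decreasing label order, $e$ is the cycle maximum iff $w < p(u)$. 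If $h(u) - h(w) \geq 2$, then the path edge $(u, p(u))$ has $\phi = p(u)$ at height strictly greater than $\phi(e) = w$, so $e \prec_T (u, p(u))$ and $e$ is never maximal.

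Finally, I would count each category: same-height non-tree edges (Case~1) are exactly those counted twice in $\sum_i \mu_i(T)$, while the externally active edges of Case~2 are exactly the pairs $(u, w)$ with $h(w) = h(u) - 1$ and $w < p(u)$, each counted once in $\nu_u(T)$ and not at all in $\nu_w(T)$. This gives $\Ext(T) = \sum_i \mu_i(T)/2 + \sum_i \nu_i(T)$, which equals $\level{\ttc(T)}$ by Theorem~\ref{thm:main_result}. The most delicate step is the middle case, where one must verify that on the tree path from $u$ to $w$ only the single edge $(u, p(u))$ competes with $e$ at the critical height $h(w)$; this uses that the descending portion of the path terminates in $(p(w), w)$, whose $\phi$-vertex sits at height $h(w) - 1$, so no descending path edge can lie at height $h(w)$.
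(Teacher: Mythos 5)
Your proposal is correct and follows essentially the same route as the paper: both arguments classify a non-tree edge by the height difference of its endpoints, show it is externally active precisely when the heights are equal or differ by one with the lower endpoint's label less than the parent of the deeper endpoint, and then match this count against $\sum_i\left(\tfrac12\mu_i(T)+\nu_i(T)\right)$. Your explicit description of $\prec_T$ via the map $\phi$ is a slightly cleaner packaging of what the paper distributes over Lemmas~\ref{lem:level} and~\ref{lem:ext_active_edges}, but the underlying argument is the same.
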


To prove this result, we need two lemmas.

\begin{lemma}\label{lem:level}
Let $G=G_{\pi}$ be a permutation graph with sink $s$, and $T$ a spanning tree of $G$. Then
\begin{align*}
\level{\ttc(T)} & = \left\vert \{ (i,j) \in E(G) : \, h(i) = h(j) \} \right\vert \\
 & + \left\vert \{ (i,j) \in E(G) : \, h(i) = h(j) -1 \text{ and } i<p(j) \} \right\vert.
\end{align*}
\end{lemma}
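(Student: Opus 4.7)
The plan is to derive this formula directly from the level expression already established in Theorem~\ref{thm:main_result}, namely
$$ \level{\ttc(T)} = \sum_{i=1}^n \left( \tfrac12 \mu_i(T) + \nu_i(T) \right). $$
It then suffices to show that $\tfrac12 \sum_i \mu_i(T)$ equals the first count in the lemma, and that $\sum_i \nu_i(T)$ equals the second.

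First I would handle the $\mu$-term. By definition $\mu_i(T) = |N_G(i) \cap T^{(h(i))}|$, so $\sum_i \mu_i(T)$ is the number of ordered pairs $(i,j)$ with $j$ a neighbor of $i$ in $G$ and $h(i) = h(j)$. Each unordered edge $\{i,j\} \in E(G)$ with $h(i)=h(j)$ contributes once via $\mu_i$ and once via $\mu_j$, so it appears exactly twice in the sum. Dividing by $2$ yields the number of edges of $G$ whose endpoints lie at the same height in $T$, which is the first term of the claim.

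Next I would handle the $\nu$-term. By definition $\nu_i(T)=|N_G(i) \cap T^{(h(i)-1)} \cap [0,p(i)-1]|$, so $\sum_i \nu_i(T)$ counts ordered pairs $(j,i)$ with $j \in N_G(i)$, $h(j)=h(i)-1$, and $j<p(i)$. Renaming variables (swapping the roles of $i$ and $j$) this is exactly the number of pairs $(i,j) \in E(G)$ with $h(i)=h(j)-1$ and $i<p(j)$. I should note that the condition $h(i)=h(j)-1 \geq 0$ forces $h(j)\geq 1$, so $j \neq s$ and the parent $p(j)$ is well-defined; moreover since the height difference pins down which endpoint is $i$ and which is $j$, each qualifying edge of $G$ is counted exactly once. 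This matches the second term of the claim, completing the proof.

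There is no real obstacle here: the content is a bookkeeping exercise that reindexes the two sums coming out of Theorem~\ref{thm:main_result}. The only subtlety worth flagging explicitly is the factor of $1/2$ arising because $\mu_i$ double-counts same-height edges, whereas $\nu_i$ already picks out a preferred endpoint (the deeper one), and thus no symmetry factor is needed for that term.
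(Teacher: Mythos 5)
Your proof is correct and follows exactly the paper's route: start from the level formula $\level{\ttc(T)} = \sum_i \left( \tfrac12 \mu_i(T) + \nu_i(T) \right)$ of Theorem~\ref{thm:main_result}, observe that $\sum_i \mu_i$ double-counts the same-height edges, and that $\sum_i \nu_i$ counts the second set once. You merely spell out the reindexing of the $\nu$-sum in more detail than the paper does.
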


\begin{proof}
From Theorem~\ref{thm:main_result}, we have that $ \level{\ttc(T)} = \sum\limits_{i=1}^n \left( \frac12 \mu_i + \nu_i \right)$. Moreover, we saw in the proof of that result that $\sum\limits_{i=1}^n \frac12 \mu_i$ counts edges $(i,j)$ of $G$ such that $h(i) = h(j)$, that is the first term of the right-hand side of Lemma~\ref{lem:level}, while it is clear that $\sum\limits_{i=1}^n \nu_i$ counts the second term, so the result immediately follows.
\end{proof}

\begin{lemma}\label{lem:ext_active_edges}
Let $G=G_{\pi}$ be a permutation graph with sink $s$, $T$ a spanning tree of $G$, and~$\prec_T$ the order on the edges of $G$ given by Algorithm~\ref{algo:<_T}. Suppose that an edge $e=(i,j)$ is externally active for $\prec_T$. Then we have $ \vert h(i) - h(j) \vert \leq 1$.
\end{lemma}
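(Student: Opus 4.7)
The plan is to argue by contradiction: assume $e=(i,j)$ is externally active for $\prec_T$ and that $|h(i)-h(j)|\geq 2$. Write $e$ as $(i,j)$ with $h(i)\geq h(j)+2$ (the other case is symmetric). Since $h(p(i)) = h(i)-1 \geq h(j)+1 > h(j)$, the parent edge $(i,p(i))\in T$ lies on the unique $i$-to-$j$ path in $T$, hence on the unique cycle of $T\cup\{e\}$. The goal will then be to exhibit this parent edge as strictly greater than $e$ in $\prec_T$, contradicting the external activity of $e$.

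The key step is to understand when an edge is inserted into the order by Algorithm~\ref{algo:<_T}. An edge of $G$ is placed in the order precisely at the step when its first-visited endpoint (in the BFS traversal driving the algorithm) is processed; the other endpoint is still unvisited at that moment and thus appears in the set $S$. Moreover, edges added at later steps are strictly larger (in $\prec_T$) than edges added at earlier steps. So I will compare visit times: the BFS visits all of $T^{(h(j))}$ before any vertex of $T^{(h(j)+1)}\cup T^{(h(j)+2)}\cup\cdots$, so in particular $j$ is visited strictly before $p(i)$ (which sits at height $h(i)-1\geq h(j)+1$) and strictly before $i$ itself. Therefore $e=(i,j)$ is added to the order when $j$ is visited, whereas $(i,p(i))$ is added when $p(i)$ is visited, and $p(i)$ is visited strictly later than $j$. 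This gives $e\prec_T (i,p(i))$, so $e$ cannot be the maximum edge in the cycle of $T\cup\{e\}$, contradicting external activity.

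The main obstacle, which is really just bookkeeping, is verifying the ``first-visited endpoint'' characterization of when each edge enters the order and confirming that $p(i)$ is indeed unvisited at the moment $j$ is processed (so that the edge $(i,p(i))$ is not accidentally inserted earlier via $p(i)$ — in fact $(i,p(i))$'s earlier endpoint is $p(i)$, not $j$, since $j$ and $p(i)$ are different vertices and $(j,p(i))$ may or may not be in $G$). Once this is in place, the strict inequality on BFS visit times between $j$ and $p(i)$ follows from $h(p(i))>h(j)$, and the conclusion is immediate. No subtlety arises from the within-level tie-breaking rule, since we only need to compare edges whose earlier endpoints live at different heights.
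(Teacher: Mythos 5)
Your proof is correct and follows essentially the same route as the paper: assume $h(i)\geq h(j)+2$, observe that $h(i)>h(p(i))>h(j)$ forces $(i,j)\prec_T(i,p(i))$ by the construction of Algorithm~\ref{algo:<_T}, and conclude that $e$ cannot be maximal in the cycle of $T\cup\{e\}$, which contains $(i,p(i))$. Your additional bookkeeping about when each edge enters the order (at the BFS visit of its earlier endpoint) is exactly the detail the paper leaves implicit.
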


\begin{proof}
Suppose that $e=(i,j)$ with $h(i) \geq h(j) + 2$. In particular, we have $h(i) > h(p(i)) > h(j)$. By the construction in Algorithm~\ref{algo:<_T}, we therefore have $(i,j) \prec_T (i,p(i))$, and since the unique cycle of $T \cup \{e\}$ contains the edge $(i,p(i))$ this implies that $e$ is not externally active, which completes the proof.
\end{proof}

We now prove Theorem~\ref{thm:ext_activity_level}.

\begin{proof}[Proof of Theorem~\ref{thm:ext_activity_level}]
Let $e=(i,j)$ be an edge of $G \setminus T$, with $h(i) \leq h(j)$. By Lemma~\ref{lem:level}, it is sufficient to show that $e$ is externally active if and only if $h(i) = h(j)$ or $h(i) = h(j) - 1$ and $i < p(j)$.

First suppose that $e$ is externally active. Lemma~\ref{lem:ext_active_edges} implies that we have  $h(i) = h(j)$ or $h(i) = h(j) - 1$. If $h(i) = h(j)$ there is nothing to do. If $h(i) = h(j) - 1$, we need to show that $i < p(j)$. But if $i > p(j)$ (we cannot have $i = p(j)$ since $(i,j)$ is not an edge of $T$) the construction in Algorithm~\ref{algo:<_T} implies that $(i,j) \prec_T (p(j),j)$ . Since the edge $(p(j),j)$ is contained in the unique cycle of $T \cup \{(i,j)\}$, this means that $e$ is not externally active, which is a contradiction. Hence we must have $i < p(j)$, as desired.

Conversely, suppose that $h(i) = h(j)$ or $h(i) = h(j) - 1$ and $i < p(j)$. Note that the unique cycle of $T \cup \{(i,j)\}$ is formed of the union of the paths $i\leftrightarrow i \wedge j$ and $j \leftrightarrow i \wedge j$ and of the edge $e$, where $i \wedge j$ is the greatest common ancestor of $i$ and $j$ in the tree $T$. If $h(i) = h(j)$ or $h(i) = h(j) - 1$, then all vertices of those paths other than $i$ and $j$ are visited before $i$ and $j$ in the construction of Algorithm~\ref{algo:<_T}, which implies that all edges of the paths $i\leftrightarrow i \wedge j$ and $j \leftrightarrow i \wedge j$ are ordered in $\prec_T$ before $(i,j)$, and thus that edge is externally active by definition. This completes the proof.
\end{proof}

Theorem~\ref{thm:ext_activity_level} states that the level of the configuration corresponding to a spanning tree $T$ via Theorem~\ref{thm:main_result} can be interpreted as the external activity of $T$ for a specific order $\prec_T$ of the edges of $G$. We now show that this order is Tutte-descriptive in the sense introduced by Courtiel \cite{Cour}.

\begin{definition}\label{def:Tutte_descriptive}
Let $G = (V,E)$ be a graph, and suppose we have a mapping $ \Psi: T \mapsto \prec_T$ from the set of spanning trees of $G$ to the set of total orders on $E$. We say that the mapping~$\Psi$ is \emph{Tutte-descriptive} if
$$
\tut_G(x,y) = \sum\limits_T x^{\Int(T)} y^{\Ext(T)},
$$
where the sum is over all spanning trees of $G$, and $\Int(T)$, resp. $\Ext(T)$, is the number of internally, resp. externally, active edges for the order $\prec_T$.
\end{definition}

\begin{remark}\label{rem:Tutte_descriptive}
In fact, Courtiel in \cite{Cour} introduces a more general notion of Tutte-descriptive activity. Our notion above corresponds to what he calls \emph{tree-compatible} order maps.
\end{remark}

\begin{theorem}\label{thm:Tutte_descriptive}
Let $G = G_{\pi}$ be a permutation graph, with sink $s$. Then the mapping $T \mapsto \prec_T$, where $\prec_T$ is the order defined by Algorithm~\ref{algo:<_T}, is Tutte-descriptive.
\end{theorem}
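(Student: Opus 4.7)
The plan is to reduce the statement to Courtiel's combinatorial criterion for tree-compatibility from \cite{Cour}, since by Remark \ref{rem:Tutte_descriptive} our Definition \ref{def:Tutte_descriptive} matches his notion of tree-compatible order maps. Courtiel's criterion provides a purely local condition on the order map, namely that the orders $\prec_T$ and $\prec_{T'}$ induced by two spanning trees differing by a single basis exchange must coincide on the edges of the corresponding fundamental cycle and cocycle. Verifying this condition for our $T \mapsto \prec_T$ would then immediately give the desired identity, since we already know from Theorem \ref{thm:ext_activity_level} that the external activity is governed by $\prec_T$ in a natural way.

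The verification proceeds by analyzing how Algorithm \ref{algo:<_T} reacts to a basis exchange. Fix a spanning tree $T$ and perform an exchange $T' = (T \setminus \{e\}) \cup \{f\}$ with $e \in T$ lying on the unique cycle of $T \cup \{f\}$. Since Algorithm \ref{algo:<_T} builds $\prec_T$ from a breadth-first traversal of $T$ rooted at the sink, the heights $h(\cdot)$ and parents $p(\cdot)$ of vertices outside the subtree hanging below $e$ are unchanged when passing to $T'$, and inside that subtree the BFS structure shifts in a controlled way dictated by the exchange. The edges of the fundamental cycle of $f$ in $T$ lie along the tree path between the endpoints of $f$, and one checks that the relative $\prec_T$-ordering of these edges depends only on the visitation order of the vertices on that path, which is preserved under the exchange. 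The dual argument, applied to the fundamental cocycle of $e$ in $T'$, handles the remaining case.

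The main obstacle is the bookkeeping in this second step: one must track carefully how the visitation order of affected vertices in the BFS changes under a basis exchange, and confirm that no edge of the fundamental cycle or cocycle is reshuffled relative to the others. The tiered structure of $G_\pi$ from Proposition \ref{pro:tieredtrees_permgraphs} restricts the possible basis exchanges between spanning trees of $G_\pi$, since any exchange must preserve the property that every non-tree edge is an inversion of $\pi$; this constraint should limit the case analysis substantially. Once tree-compatibility has been verified, applying Courtiel's main result from \cite{Cour} yields $\tut_G(x,y) = \sum_T x^{\Int(T)} y^{\Ext(T)}$, which is exactly the Tutte-descriptivity of $T \mapsto \prec_T$.
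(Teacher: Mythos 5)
Your high-level strategy --- reducing the statement to Courtiel's tree-compatibility criterion --- is the same as the paper's, whose proof consists essentially of citing \cite[Theorem 5.3]{Cour} and noting the analogy with Courtiel's treatment of the Gessel--Sagan depth-first order. The difficulty is with the condition you propose to verify. You state the criterion as requiring that $\prec_T$ and $\prec_{T'}$ \emph{coincide on the edges of the fundamental cycle} whenever $T'$ is obtained from $T$ by a basis exchange, and your verification rests on the claim that ``the relative $\prec_T$-ordering of these edges depends only on the visitation order of the vertices on that path, which is preserved under the exchange.'' Both the condition and this claim are false for the order map of Algorithm~\ref{algo:<_T}. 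Take $\pi = 3412$, so that $G_\pi$ is the $4$-cycle with edges $\{1,3\},\{2,3\},\{2,4\},\{1,4\}$, and let $s=1$. For the spanning tree $T=\{\{1,3\},\{2,3\},\{2,4\}\}$ the heights are $h(3)=1$, $h(2)=2$, $h(4)=3$, and Algorithm~\ref{algo:<_T} yields $\{1,4\}\prec_T\{1,3\}\prec_T\{2,3\}\prec_T\{2,4\}$. For the exchanged tree $T'=(T\setminus\{\{2,4\}\})\cup\{\{1,4\}\}$ the vertex $4$ jumps to height $1$ and is visited second rather than last, giving $\{1,4\}\prec_{T'}\{1,3\}\prec_{T'}\{2,4\}\prec_{T'}\{2,3\}$. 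The fundamental cycle in question is the whole edge set, and the two orders disagree on it: $\{2,3\}$ and $\{2,4\}$ are transposed. So a basis exchange genuinely does reshuffle edges of the fundamental cycle, and the confirmation you defer to ``bookkeeping'' cannot be carried out.

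The theorem itself is not in danger: on this example the four spanning trees contribute $x^3+x^2+x+y=\tut_{G_\pi}(x,y)$, as required. The point is that Courtiel's hypothesis is weaker than the one you wrote down. His framework does not ask that the restricted orders agree edge-for-edge under a basis exchange; it asks (roughly) for a recursive consistency in how the extremal edge --- and hence the activity decision --- is selected across spanning trees as the graph is peeled apart by deletion--contraction. Preserving \emph{which} edge is maximal in each fundamental cycle and cocycle is a far weaker demand than preserving the whole induced order, and it is only the former that the breadth-first order satisfies. A smaller issue: your remark that the tiered structure ``restricts the possible basis exchanges'' because every non-tree edge must be an inversion of $\pi$ is vacuous, since every edge of $G_\pi$ is by definition an inversion of $\pi$. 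To repair the argument you would need to quote Courtiel's actual condition and verify that one, or else give an independent proof, for instance by exhibiting a bijection on spanning trees carrying the pair $(\Int(T),\Ext(T))$ for the orders $\prec_T$ to the classical activities for a single fixed edge order.
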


\begin{proof}
This follows from~\cite[Theorem 5.3]{Cour} in analogous fashion to the proof of~\cite[Proposition 7.9]{Cour}, with the slight adjustments necessary to take into account that Algorithm~\ref{algo:<_T} provides an order map based on a breadth-first, rather than depth-first, search.
\end{proof}

Combining Theorems~\ref{thm:ext_activity_level} and \ref{thm:Tutte_descriptive} gives a new combinatorial proof of the link between the level polynomial and the Tutte polynomial in Proposition~\ref{pro:Tutte_level} in the case of permutation graphs.

\section{Minimal recurrent configurations and complete non-ambiguous binary trees}\label{sec:minrec_CNAB}

\subsection{Minimal recurrent configurations}\label{sec:minrec}

Given a graph $G$ and a distinguished vertex $s$ of $G$, a configuration $c\in\Config{G}$ is minimal recurrent if it is recurrent and $\level{c}=0$. We denote by $\MinRec{s}{G}$ the set of minimal recurrent configurations for the ASM on $G$. We show that on permutation graphs, minimal recurrent configurations are uniquely determined by their canonical toppling.

\begin{definition}\label{def:comp_partition_permutation}
Given a permutation $\pi \in \clsn$ and a distinguished vertex $s \in [n]$, we say that an ordered set partition $P=P_0,\ldots,P_k$ of $[n]$ is \emph{$(\pi,s)$-compatible} if it satisfies the following three conditions:
\begin{enumerate}
\item\label{defcpp1} $P_0 = \{s\}$.
\item\label{defcpp2} For any $j \geq 0$, the elements of $P_j$ appear in increasing order in $\pi$ (that is, there is no inversion in $\pi$ between two elements of $P_j$).
\item\label{defcpp3} For any $j \geq 1$ and $i \in P_j$, there exists $i' \in P_{j-1}$ such that $(i,i')$ or $(i',i)$ is an inversion of $\pi$.
\end{enumerate}
\end{definition}

\begin{example}\label{ex:comp_partition_permutation}
Let $\pi = 25341$ and $s=3$. We wish to compute the set of $(\pi,s)$-compatible ordered partitions of $[5]$. We always have $P_0 = \{3\}$. From Condition~\eqref{defcpp3}, $P_1$ must be formed of elements $i$ such that $(i,3)$ or $(3,i)$ is an inversion of $\pi$. There are two such elements: $5$ and $1$. However, $(5,1)$ is an inversion of $\pi$, so $P_1$ cannot contain both these elements by Condition~\eqref{defcpp2}. Thus we must have $P_1 = \{1\}$ or $P_1 = \{5\}$. We now remark that the only element which forms an inversion with $2$ is $1$, so that, by Condition~\eqref{defcpp3}, $2$ must be in the part immediately after that containing $1$. Moreover, the part containing $2$ must either contain another element, or be the final part of $P$. 

Suppose that $P_1=\{1\}$. By the preceding argument, $P_2$ must contain $2$ and at least one other element which forms an inversion with $1$. There are two remaining elements which do this: $4$ and $5$. Since $(5,4)$ is an inversion, $P_2$ cannot contain both of these, so we must have $P_2 = \{2,4\}$ and $P_3 = \{5\}$, or $P_2 = \{2,5\}$ and $P_3 = \{4\}$. Suppose now that $P_1=\{5\}$. By similar arguments, we must have $P_2 = \{1\}$ or $P_2 = \{4\}$. Using the argument from the previous paragraph, if $P_2 = \{1\}$, then we must have $P_3=\{2,4\}$, and if $P_2 = \{4\}$, then we must have $P_3=\{1\}$ and $P_4=\{2\}$. Finally, we see that there are four $(\pi,s)$-compatible ordered partitions, which we write as blocks separated by dashes, for clarity: 
\def\bb{\raise1pt\hbox{--}}
\def\bb{\raise.6ex\hbox{\rule{.6em}{.1ex}}}
$$
3\bb1\bb24\bb5,\;\;\;\; 3\bb1\bb25\bb4,\;\;\;\; 3\bb5\bb1\bb24,\;\;\;\;  3\bb5\bb4\bb1\bb2.
$$
\end{example}

\begin{prop}\label{pro:minrec_orderedpartitions}
Let $\pi \in \clsn$ and $s \in [n]$. The map $\ctp : c \mapsto \CanonTop(c)$ is a bijection from the set $\MinRec{s}{G_{\pi}}$ of minimal recurrent configurations on the permutation graph $G_{\pi}$ to the set of $(\pi,s)$-compatible ordered partitions of $[n]$. 
\end{prop}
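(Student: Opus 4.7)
The plan is to deduce the claim from Theorem~\ref{thm:main_result}. Under the bijection $\ttc$, the level formula $\level{\ttc(T)} = \sum_i\bigl(\tfrac12 \mu_i(T) + \nu_i(T)\bigr)$ together with $\mu_i, \nu_i \geq 0$ shows that the minimal recurrent configurations in $\MinRec{s}{G_\pi}$ correspond, via $\ttc$, precisely to the spanning trees $T$ of $G_\pi$ satisfying $\mu_i(T) = \nu_i(T) = 0$ for every $i$; call these the \emph{minimal} spanning trees. Theorem~\ref{thm:main_result} also identifies $\CanonTop(\ttc(T))$ with the BFS-layering $T^{(0)}, T^{(1)}, \ldots$ of $T$ rooted at $s$. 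Hence the proposition reduces to exhibiting a bijection between minimal spanning trees of $G_\pi$ and $(\pi,s)$-compatible ordered partitions of $[n]$, given by $T \mapsto (T^{(0)}, T^{(1)}, \ldots)$.

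For the forward direction, assuming $T$ is minimal I would check the three conditions of Definition~\ref{def:comp_partition_permutation} directly for $P_j := T^{(j)}$: condition~\eqref{defcpp1} holds by rooting at $s$; the identity $\mu_i(T) = 0$ says that no two vertices at the same height in $T$ are neighbors in $G_\pi$, which by the definition of $G_\pi$ means no inversion of $\pi$ between them, giving \eqref{defcpp2}; and the tree-parent of any $i \in P_j$ with $j \geq 1$ lies in $P_{j-1}$ and is adjacent to $i$ in $G_\pi$, giving \eqref{defcpp3}.

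For the inverse, given a $(\pi,s)$-compatible partition $P = P_0, \ldots, P_k$, I would build a rooted tree $T(P)$ with levels $T(P)^{(j)} := P_j$ and parent map $p(i) := \min\bigl(N_{G_\pi}(i) \cap P_{j-1}\bigr)$ for each $i \in P_j$, $j \geq 1$. Condition~\eqref{defcpp3} ensures the minimum exists, so $T(P)$ is a well-defined spanning tree of $G_\pi$. Condition~\eqref{defcpp2} then forces $\mu_i(T(P)) = 0$, and the ``smallest neighbor'' choice forces $\nu_i(T(P)) = 0$, since no neighbor of $i$ in $P_{j-1}$ has label strictly below $p(i)$. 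Thus $T(P)$ is minimal with BFS-layering exactly $P$, giving surjectivity; injectivity follows by the same uniqueness, since in any minimal tree the vanishing of $\nu_i$ forces $p(i) = \min(N_{G_\pi}(i) \cap P_{j-1})$, which is determined by the layering alone. The main obstacle I expect is cleanly extracting this ``smallest neighbor'' parent rule from the identity $\nu_i = 0$ and then verifying that Conditions~\eqref{defcpp1}--\eqref{defcpp3} together with this rule really produce a (connected, acyclic) minimal spanning tree, rather than some weaker subgraph — the acyclicity being automatic from the strictly increasing level indices, but the spanning property requiring Condition~\eqref{defcpp3} applied to every non-root vertex.
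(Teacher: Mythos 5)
Your proposal is correct and follows essentially the same route as the paper: both use Theorem~\ref{thm:main_result} to translate minimality into $\mu_i(T)=\nu_i(T)=0$, verify the three conditions of Definition~\ref{def:comp_partition_permutation} via the BFS-layering, and invert by rebuilding the tree with the parent rule $p(i)=\min\bigl(N_{G_\pi}(i)\cap P_{j-1}\bigr)$. Your explicit observation that $\nu_i=0$ forces this minimal-neighbor parent (hence injectivity) is exactly the content the paper dismisses as ``straightforward,'' so nothing is missing.
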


\begin{proof}
Let $c \in \MinRec{s}{G_{\pi}}$, and define $P := \CanonTop(c) = P_0,\ldots,P_k$. By definition, $P$ is an ordered partition of $[n]$ and $P_0 = \{s\}$. 
Let $T:=\ttc^{-1}(c)$ be the spanning tree of $G=G_{\pi}$ corresponding to $c$ via the inverse of the bijection in Theorem~\ref{thm:main_result}, and for any $i \in [n]$, let $\lambda_i(T),\mu_i(T),\nu_i(T)$ be defined as in Equations~\eqref{eq:def_lambda},~\eqref{eq:def_mu},~\eqref{eq:def_nu} in Section~\ref{sec:main_thm}.
By Theorem~\ref{thm:main_result}, we have $\CanonTop(c) = T^{(1)},\ldots,T^{(k)}$, that is, $P_j = T^{(j)}$ for all $j \in [k]$.
Moreover, since $c$ is minimal, we have $\level{c} = 0$, which in particular implies $\mu_i(T) = 0$ for all $i \in [n]$. Thus for any $j \in [k]$ there are no edges in $G$ between any two vertices of $P_j$. This implies that the elements of $P_j$ appear in increasing order in $\pi$, so Condition~\eqref{defcpp2} of Definition~\ref{def:comp_partition_permutation} is satisfied.

Now let $j \geq 2$ and $i \in T^{(j)}=P_j$. Let $i' = p(i)$ be the parent of $i$ in $T$, so that $i' \in T^{(j-1)} = P_{j-1}$. Since $(i',i)$ is an edge of $T$ it is also an edge of $G$, which means that $(i',i)$ or $(i,i')$ is an inversion of $\pi$, as desired. We have thus shown that if $c \in \MinRec{s}{G_{\pi}}$, then $\CanonTop(c)$ is a $(\pi,s)$-compatible ordered partition of $[n]$. This shows that the map of Proposition~\ref{pro:minrec_orderedpartitions} is well defined.

To show that it is a bijection, we define its inverse. Suppose that $P = P_0,\ldots,P_k$ is a $(\pi,s)$-compatible ordered partition of $[n]$. We first construct a spanning tree $T=T(P)$ of $G$. The tree $T$ will be rooted at $s$ so that for all $j \in [k]$ we have $T^{(j)} = P_j$. To define $T$ it is thus sufficient to define the parent map $p$. For $j \geq 1$, and $i \in P_j$, we define 
\begin{equation}\label{eq:def_parent}
p(i) := \min \left( N^G(i) \cap P_{j-1} \right).
\end{equation}
By Condition~\eqref{defcpp3} of Definition~\ref{def:comp_partition_permutation}, $p(i)$ is well defined, that is, $N^G(i) \cap P_{j-1} \neq \emptyset$.

We now define $c=\phi_{PC} (P) := \ttc(T(P))$, where $\ttc$ is the bijection of Theorem~\ref{thm:main_result}, and $T(P)$ is the tree defined above. We show that $c$ is minimal. Let $i \in [n]$. By Condition~\eqref{defcpp2} of Definition~\ref{def:comp_partition_permutation}, we have $\mu_i(T) = 0$ since there are no edges in $G$ between any two vertices of $T^{\left( h(i) \right)} = P_{h(i)}$. Moreover, Equation~\eqref{eq:def_parent} implies that $\nu_i(T)=0$. Since these are true for any $i \in [n]$ it follows from Theorem~\ref{thm:main_result} that $\level{c}=0$, as desired.

Finally, it is straightforward to show that the maps $\phi_{PC}$ and $\ctp$ are inverses of each other, which completes the proof.
\end{proof}

\subsection{Complete non-ambiguous binary trees}\label{sec:CNAB}

Non-ambiguous binary trees were introduced and studied in~\cite{ABBS} as a special case of the tree-like tableaux from~\cite{ABN}.

\begin{definition}\label{def:NAB}
A non-ambiguous binary tree (\nab) is a filling of a rectangular Ferrers diagram $F$ where every cell is either empty or dotted such that:
\begin{enumerate}
\item\label{nab:1} Every row and every column has a dotted cell.
\item\label{nab:2} Except for the top left cell, every dotted cell has either a dotted 
cell above it in its column or to its left in its row, but not both.
\end{enumerate}
 The dot in the top left cell (implied by \eqref{nab:1} and \eqref{nab:2}) is called the \emph{{root dot}}, or simply the \emph{root}.
\end{definition}

Through the remainder of this section, when we talk about a dot to the left/right of 
another dot we mean in the \emph{same row}, and similarly in the \emph{same column} for above/below.

The name \emph{non-ambiguous binary tree} comes from the fact that by drawing an edge between a dotted cell and the dotted cell immediately above it or to its left, for all dotted cells, one creates a binary tree, embedded in the grid $\mathbb{Z}_2$. Regarding the dot in the top left cell as a root of the tree, Condition~\ref{nab:2} of Definition~\ref{def:NAB} ensures that every other dot has a unique parent.  

A \nab\ is \emph{complete} if the associated binary tree is complete, that is, every dotted cell has either a dotted cell below it and to its right, or neither of these, and we refer to such complete \nabs\ as \cnabs. Figure~\ref{fig:ex_NAB} shows two examples of \nabs, with the edges of the associated binary tree drawn in. The left-hand one is complete, while the right-hand one is not.

\begin{figure}[h]
  \centering
  \begin{tikzpicture}[scale=0.35]
  \newcommand\dott[2]{\draw [fill=black] (#1,#2) circle [radius=0.25];}
  \begin{scope}[shift={(-1,-4)}]
  \draw[step=2cm,thick] (0,0) grid (10,10);
  \end{scope}
  \begin{scope}[shift={(-2,-7)}]
    \draw (2,6)--(2,12);
    \draw (2,12)--(10,12);
    \draw (2,10)--(8,10);
    \draw (4,8)--(4,12);
    \draw (6,4)--(6,12);

    \dott{2}{12}
    \dott{4}{12}
    \dott{6}{12}
    \dott{10}{12}

    \dott{2}{10}
    \dott{8}{10}

    \dott{4}{8}

    \dott{6}{4}


    \dott{2}{6}
  \end{scope}

    \begin{scope}[shift={(13,-3)}]
    \draw[step=2cm,thick] (0,0) grid (10,8);
    \draw (3,1)--(3,5)--(7,5)--(1,5)--(1,7)--(9,7)--(5,7)--(5,3);
    \draw [fill=black] (3,1) circle [radius=0.25];
    \draw [red,fill=red] (1,5) circle [radius=0.25];
    \draw [fill=black] (3,5) circle [radius=0.25];
        \draw [fill=black] (1,7) circle [radius=0.25];    
    \draw [fill=black] (5,3) circle [radius=0.25];
    \draw [fill=black] (5,7) circle [radius=0.25];
    \draw [fill=black] (7,5) circle [radius=0.25];
    \draw [fill=black] (9,7) circle [radius=0.25];
    \end{scope}
  \end{tikzpicture}
  
  \caption{Two examples of \nabs. The left-hand one is complete, and thus a \cnab, while the right-hand one is not, since the red vertex in column 1, row 3, has only one child. \label{fig:ex_NAB}}
\end{figure}

\begin{lemma}
A \nab\ on a Ferrers diagram $F$ has exactly $n$ dots, where $n$ is one less 
than the semi-perimeter of $F$.  If a  \nab\ is complete then $F$ has the same number of rows as columns.
\end{lemma}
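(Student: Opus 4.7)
My plan is to prove both claims by a straightforward double-counting argument. Let $F$ be the rectangular Ferrers diagram of the given \nab, with $r$ rows and $c$ columns, and let $n$ denote the number of dots; the goal is to show $n = r + c - 1$, which is one less than the semi-perimeter $r+c$, and, assuming completeness, $r = c$.

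For the first statement, I would use condition~\eqref{nab:2} to partition the $n-1$ non-root dots into \emph{row-children} (those with a dot to the left in the same row) and \emph{column-children} (those with a dot above in the same column). A dot is a row-child precisely when it is not the leftmost dot in its row, and a column-child precisely when it is not the topmost dot in its column. By condition~\eqref{nab:1}, every row contains at least one dot, so there are exactly $r$ leftmost-in-row dots and hence $n - r$ row-children; symmetrically, there are $n - c$ column-children. Setting $n - 1 = (n - r) + (n - c)$ yields $n = r + c - 1$.

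For the completeness claim, I would invoke the dual dichotomy. Call a dot a \emph{leaf} if it has neither a dot below it in its column nor a dot to its right in its row. The completeness hypothesis says every dot is either a leaf or has both a dot below and a dot to its right; equivalently, a dot has no dot below it if and only if it has no dot to its right. Hence the set of bottommost-in-column dots coincides with the set of rightmost-in-row dots (both being precisely the set of leaves); by condition~\eqref{nab:1} these sets have sizes $c$ and $r$ respectively, whence $r = c$. I foresee no substantive obstacle here — the whole argument is bookkeeping, the only care needed being the correct reading of the exclusive-or in condition~\eqref{nab:2} and of the equivalence encoded by completeness.
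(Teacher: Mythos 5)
Your proof is correct. For the first claim it is essentially the paper's argument viewed from the complementary side: the paper pushes each dot to the top row or the leftmost column (i.e., counts the topmost-in-column and leftmost-in-row dots, getting $c+r-1$ after accounting for the root being both), whereas you count the non-root dots by splitting them, via the exclusive-or in Condition (2), into the $n-r$ dots that are not leftmost in their row and the $n-c$ that are not topmost in their column; the two counts are equivalent bookkeeping. For the second claim your route is genuinely a little different and, to my eye, cleaner: the paper pairs each non-root dot that moves to the top row with a dot that moves to the leftmost column via the parent relation (the parent of a right-child must, by completeness, also have a child below), which implicitly sets up a bijection between $c-1$ and $r-1$ objects. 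You instead observe that completeness makes ``no dot below'' equivalent to ``no dot to the right,'' so the set of leaves is simultaneously the set of bottommost-in-column dots (of size $c$) and the set of rightmost-in-row dots (of size $r$), giving $r=c$ directly. Both arguments are sound; yours avoids having to verify that the parent-based pairing is a bijection. The only point worth making explicit (though it is immediate) is that the root, sitting in the top-left cell, is neither a row-child nor a column-child, which is what justifies the equation $n-1=(n-r)+(n-c)$.
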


\begin{proof}
Each non-root dot has a dot above it or to its left, but not both.  If such a dot has no dot above it, move it to the top row.  Otherwise it has no dot to its left, in which case move it to the leftmost column. This moves every dot either to the top row or leftmost column. (We regard dots in the top row and leftmost column as being moved, although they stay put.) Every column has a dot that will be moved up, namely the column's topmost dot, and every row's leftmost dot moves to the leftmost column.  This process will therefore leave dots in the entire top row and leftmost column, but nowhere else, which proves the first part.

Given a complete NAB, we trace the above process of moving dots to the top row or leftmost column.  For each non-root dot that gets moved to the top row the dot to its left (its parent) has a dot below it, which therefore gets moved into the leftmost column, and conversely.  Thus, there must be as many rows as columns.
\end{proof}


\subsection{Complete non-ambiguous binary multitrees}\label{sec:CNABM}

Given a permutation $\pi=\pi_1\pi_2\ldots\pi_n$ let $\tpi$ be the $n\times n$ grid with dots in cells $(\pi_i,i)$, where cell $(i,j)$ is the cell in row $i$ and column $j$,
the northwest corner cell being $(1,1)$.

\begin{definition}\label{def:cmNAB}
A \emph{complete multirooted non-ambiguous binary tree (CMNAB)} is obtained from $\tpi$, for a permutation $\pi$, by adding a further $n-1$
 dots with the following conditions:
\begin{enumerate}
\item\label{mnab-cond1} Every added dot has a dotted cell below it in its column and to its right in its row.
\item\label{mnab-cond2} The graph obtained as in the case of a \cnab\ is a tree.
\end{enumerate}
The added dots are called \emph{internal dots}. The set of \cmnabs\ arising from~$\tpi$ is denoted~$\mpi$, and the tree obtained from $M\in\mpi$ is denoted $T(M)$.
\end{definition}

A \cmnab\ gives a multirooted tree where the roots are the dots 
with no dots to the left or above; see Figure~\ref{fig:map_zeta}.  We will show that \cnabs\ are precisely the \cmnabs\ with a single root. 

\begin{lemma}\label{lem:cell2edge}
A cell $(i,j)$ in $\tpi$ has a dot to the right and below if and only if 
there is an edge between $i$ and $\pi_j$ in $G_\pi$.
\begin{proof}
If a cell $(i,j)$ has dots both to the right and below then there is a leaf dot below it in row $r>i$, so $\pi_j=r$, and to its right in column $c>j$, so $\pi_c=i<r$. Since $j<c$ and $\pi_j>\pi_c=i$, $\pi_j$ and $i$ form an inversion in $\pi$, so $(\pi_j,i)$ is an edge in $G_\pi$. 

Conversely,  an edge in $G_\pi$ corresponds to an inversion in $\pi$, which in turn corresponds to a pair of leaf dots in $\pi$, the leftmost of which is lower than the other, and thus there is a cell above the leftmost one and to the left of the other.
 \end{proof}
\end{lemma}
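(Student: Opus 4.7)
The plan is to reduce the biconditional to a chain of algebraic equivalences, exploiting that $\tpi$ is a permutation matrix. First I would observe that each column $j$ of $\tpi$ contains exactly one dot, located at row $\pi_j$, and similarly each row $i$ contains exactly one dot, at column $\pi^{-1}(i)$. Consequently, the cell $(i,j)$ has a dot below it (in its own column) if and only if $\pi_j > i$, and it has a dot to its right (in its own row) if and only if $\pi^{-1}(i) > j$.

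Combining these two translations, the compound condition ``cell $(i,j)$ has a dot to the right and below'' is equivalent to the pair of inequalities $\pi_j > i$ and $j < \pi^{-1}(i)$. I would then reinterpret this pair as an inversion statement: the two positions $j$ and $\pi^{-1}(i)$ satisfy $j < \pi^{-1}(i)$, while their values $\pi_j$ and $i = \pi_{\pi^{-1}(i)}$ satisfy $\pi_j > i$. By the definition of $G_\pi$ as the inversion graph of $\pi$, this pair of inequalities is precisely the statement that $\{i,\pi_j\}$ is an edge of $G_\pi$. Running the chain of equivalences in reverse yields the converse direction immediately.

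The only nuance I would flag is the orientation of the inversion: the phrase ``edge between $i$ and $\pi_j$'' must be read as referring to the inversion in which $\pi_j$ is the larger value at the earlier position $j$, and the dot-below inequality $\pi_j > i$ automatically forces exactly this orientation, so no ambiguity arises. I do not anticipate any real obstacle; the lemma is essentially a dictionary between the permutation-matrix picture of $\tpi$ and the inversion picture of $G_\pi$, and the proof can be presented as a single short chain of ``if and only if''s once the permutation-matrix translation in the first paragraph is recorded.
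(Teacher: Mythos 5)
Your proposal is correct and follows essentially the same route as the paper's proof: both reduce the statement to the observation that the unique dot in column $j$ sits at row $\pi_j$ and the unique dot in row $i$ sits at column $\pi^{-1}(i)$, so that ``dot below and to the right'' is exactly the inversion condition $j<\pi^{-1}(i)$, $\pi_j>i$. Your explicit remark about the orientation of the inversion is a fair point --- the lemma is implicitly read with $\pi_j>i$, as the paper's later use of the correspondence (placing a dot in cell $(i,j)$ for each edge $(i,\pi_j)$ with $i<\pi_j$) confirms.
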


By Lemma~\ref{lem:cell2edge} every cell in $\tpi$ with an internal dot corresponds to an 
edge in $G_\pi$.  So we can map the elements of $\mpi$ to subgraphs of $G_\pi$, by the map $\zeta$ which maps $M\in\mpi$ to the subgraph $\zeta(M)$ of $G_\pi$ with edge set
$$
E(\zeta(M))=\{(i,\pi_j) : (i,j)\text{ contains an internal dot in }M\}.
$$
Note that the non-internal dots in $M$ are the leaves of $T(M)$ and they correspond precisely to the pairs $(i,j)$
 where $\pi_j=i$.

The following lemma is straightforward to prove.

\begin{lemma}\label{lem:paths}
Let $S=\zeta(M)$ for a \cmnab\ $M\in \mpi$, so $S$ is a subgraph of $G_\pi$. The sequence 
$$
v_1,e_1,v_2,e_2,\ldots,e_{k-1},v_k,
$$
alternating between vertices and edges in $S$, is a path in $S$ if and only if 
$$
\ell_1,i_1,\ell_2,i_2,\ldots,\ell_{k-1},i_k
$$
is an alternating sequence of leaf and internal dots in $M$, with every pair of consecutive dots in the same row or column, where $\ell_t$ and $i_t$ are the dots corresponding to the vertex $v_t$ and edge $e_t$, respectively. In particular, $T(M)$ being connected is equivalent to $S$ being connected.  Moreover, adding an edge to $S$ corresponds to closing such a sequence through $M$ to a cycle.
\end{lemma}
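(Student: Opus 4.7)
My plan is to prove the lemma by first setting up an explicit geometric dictionary between the dots of $M$ and the combinatorial objects of $G_\pi$, via Lemma~\ref{lem:cell2edge}. A leaf dot at cell $(r,\pi^{-1}(r))$ represents the vertex $r$, and an internal dot at cell $(r,c)$ represents the edge $\{r,\pi_c\}$ of $G_\pi$: the leaf for $r$ sits to its right in row $r$, and the leaf for $\pi_c$ sits below it in column $c$. Equivalently, for an edge $\{a,b\}$ of $G_\pi$ with $a<b$, the associated internal dot occupies cell $(a,\pi^{-1}(b))$, sharing a row with the leaf $\ell_a$ for $a$ and a column with the leaf $\ell_b$ for $b$.

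For the forward direction of the path correspondence, given a path $v_1,e_1,v_2,\ldots,v_k$ in $S$ I use this dictionary to form the sequence $\ell_1,i_1,\ell_2,\ldots,\ell_k$; the observation above guarantees that each consecutive pair $(\ell_t,i_t)$ and $(i_t,\ell_{t+1})$ lies in a common row or a common column (one sharing the row of the smaller endpoint of $e_t$, the other sharing the column of the larger endpoint). The converse is the same dictionary read in reverse: knowing $i_t$'s row and column pins down its two endpoint-leaves (its row-partner and column-partner), and the alternation condition forces these to be $\ell_t$ and $\ell_{t+1}$ in some order, so $v_tv_{t+1}$ is indeed an edge of~$S$.

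The ``in particular'' statement then follows. Condition~(2) of Definition~\ref{def:cmNAB} already forces $T(M)$ to be a tree, so the content is that $S$ is correspondingly connected. Given two leaves $\ell,\ell'$ in $M$, I follow the unique path between them in $T(M)$, a sequence whose consecutive dots share a row or column (since that is the only way $T(M)$ places edges). If this path is already alternating leaf--internal--$\cdots$--leaf, the main correspondence immediately produces a path in $S$. Otherwise, wherever two consecutive internal dots share, say, row~$r$, their represented edges both contain the vertex $r$, and I insert the leaf at $(r,\pi^{-1}(r))$ between them; the inserted leaf shares a row with both neighbours, so the alternating row/column condition is preserved. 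Iterating produces the desired alternating dot sequence, hence a path in $S$ between the vertices represented by $\ell$ and $\ell'$. Conversely, any path in $S$ lifts to an alternating dot sequence all of whose members therefore lie in a single component of $T(M)$.

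The closing clause about cycles is now a direct consequence: adding an edge $e\notin S$ creates a cycle precisely when its endpoints are already joined in $S$, equivalently (by the path correspondence just proved) when there is an alternating dot sequence in $M$ between the two corresponding leaves that avoids the internal dot $i_e$; concatenating this sequence with $\ell_a,i_e,\ell_b$ closes it into a cyclic alternating sequence through $M$. The main obstacle I anticipate is the ``insertion'' step in the connectivity argument, where I need to convert a path in $T(M)$ between leaves into an alternating leaf--internal sequence; but the geometric fact that two internal dots in a common row (resp.\ column) represent edges sharing the vertex indexed by that row (resp.\ column) makes the insertion entirely mechanical, so no further combinatorial machinery should be required.
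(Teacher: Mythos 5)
The paper offers no proof of this lemma at all (it is declared ``straightforward to prove''), so there is nothing to compare against; your argument correctly supplies the missing details. The dictionary you set up (leaf dot $(r,\pi^{-1}(r))\leftrightarrow$ vertex $r$; internal dot $(a,\pi^{-1}(b))\leftrightarrow$ edge $\{a,b\}$ with $a<b$, sharing the row of $\ell_a$ and the column of $\ell_b$) is exactly the content of Lemma~\ref{lem:cell2edge}, and both directions of the path correspondence follow from it as you say, using that each row and each column of $\tpi$ contains exactly one leaf dot. Two small points worth making explicit: in the connectivity argument, the only possible failure of alternation along a path in $T(M)$ is a pair of consecutive \emph{internal} dots, because no two leaf dots ever share a row or column (one leaf per row and per column), so your insertion step really does handle every case; and in the converse direction, the step from ``consecutive dots of the sequence share a row or column'' to ``they lie in the same component of $T(M)$'' uses that all dots in a common row or column form a chain of nearest-neighbour edges in $T(M)$ --- worth a clause, since the sequence's consecutive dots need not themselves be adjacent in $T(M)$. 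With those remarks your proof is complete and correct.
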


\begin{figure}[h]
\centering
\begin{tikzpicture}[scale=0.35]
  \newcommand\dott[2]{\draw [fill=black] (#1,#2) circle [radius=0.25];}
  \draw[step=2cm,thick] (0,0) grid (12,12);
  \begin{scope}[shift={(-1,-1)}]
    \draw (2,6)--(2,10);
    \draw (4,12)--(10,12);
    \draw (2,10)--(8,10);
    \draw (4,2)--(4,12);
    \draw (6,4)--(6,12);
    \draw (4,8)--(12,8);

    \dott{4}{12}
    \dott{6}{12}
    \dott{10}{12}

    \dott{2}{10}
    \dott{4}{10}
    \dott{8}{10}

    \dott{4}{8}
    \dott{12}{8}

    \dott{6}{4}

    \dott{4}{2}

    \dott{2}{6}

    \node at(2,14.3){$1$};
    \node at(4,14.3){$2$};
    \node at(6,14.3){$3$};
    \node at(8,14.3){$4$};
    \node at(10,14.3){$5$};
    \node at(12,14.3){$6$};

    \node at(-0.3,12){$1$};
    \node at(-0.3,10){$2$};
    \node at(-0.3,8){$3$};
    \node at(-0.3,6){$4$};
    \node at(-0.3,4){$5$};
    \node at(-0.3,2){$6$};

  \end{scope}

  \begin{scope}[shift={(18,0)}]
    \newcommand\nod[3]{\draw[fill=white](#1,#2)circle[radius=.75];\node at(#1,#2){$#3$};}
    \begin{scope}[shift={(-1,-1)}]
      \draw[thin] (4,12)--(10,12)--(12,7)--(10,2)--(4,2)--(2,7)--(4,12);
      \draw[thin] (4,2)--(4,12)--(12,7)--(2,7)--(10,12)--(10,2)--(4,2);

      \draw[red, ultra thick] (2,7)--(10,12)--(4,12)--(12,7)--(10,2);
      \draw[red, ultra thick] (4,2)--(4,12);

      \nod{4}{12}{6}
      \nod{10}{12}{1}
      \nod{2}{7}{5}
      \nod{12}{7}{2}
      \nod{4}{2}{3}
      \nod{10}{2}{4}
    \end{scope}
  \end{scope}
\end{tikzpicture}

\caption{An element $M\in\mathcal{M}_{465213}$, and the graph $G_{465213}$, with the spanning tree corresponding to $M$ marked with thick red lines. Moving the dot at $(2,2)$ to $(1,1)$, thus creating a \cnab, would correspond to replacing the edge $(2,6)$ by $(1,4)$ in the spanning tree, whereas moving the dot at $(2,1)$ to $(3,1)$ corresponds to replacing the edge $(2,4)$ with $(3,4)$.\label{fig:map_zeta}}
\end{figure}

\begin{prop}
The map $\zeta$ is a bijection from $\mpi$ to the spanning trees of $G_\pi$.
\end{prop}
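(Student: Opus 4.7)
The plan is to verify bijectivity by establishing three properties in sequence, using Lemmas~\ref{lem:cell2edge} and~\ref{lem:paths} as the main tools.

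First, I would show that $\zeta(M)$ is a spanning tree of $G_\pi$ for every $M\in\mpi$. The proof of Lemma~\ref{lem:cell2edge} in fact gives a bijection between cells $(i,j)$ of $\tpi$ having dots both below and to the right and edges of $G_\pi$ (the cell $(i,j)$ corresponds uniquely to the edge $\{i,\pi_j\}$, whose larger endpoint pins down the column). So the $n-1$ internal dots of $M$ yield $n-1$ distinct edges in $\zeta(M)$. Lemma~\ref{lem:paths} then transfers connectedness of $T(M)$, which holds by Condition~\eqref{mnab-cond2} of Definition~\ref{def:cmNAB}, to connectedness of $\zeta(M)$. A connected graph on $n$ vertices with $n-1$ edges is a spanning tree.

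Injectivity is immediate from the same bijection: the internal dots of $M$ are uniquely recovered from $E(\zeta(M))$, while the leaves are prescribed by $\pi$. Hence $\zeta$ is injective.

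For surjectivity, given a spanning tree $S$ of $G_\pi$, I would define $M$ by placing an internal dot in the cell of $\tpi$ corresponding via Lemma~\ref{lem:cell2edge} to each edge of $S$. Then $\zeta(M)=S$ by construction, so it suffices to verify $M\in\mpi$. Condition~\eqref{mnab-cond1} is precisely the ``dot below and to the right'' characterization of Lemma~\ref{lem:cell2edge}. For Condition~\eqref{mnab-cond2}, Lemma~\ref{lem:paths} yields connectedness of $T(M)$ from that of $S$, and acyclicity follows by contrapositive: a cycle in $T(M)$ would give two distinct alternating sequences of leaves and internal dots joining a common pair of leaves, which by Lemma~\ref{lem:paths} translate into two distinct paths in $S$ between the associated vertices, contradicting the tree property of $S$.

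The main obstacle I anticipate is formalizing this acyclicity step cleanly. The dot-to-parent structure makes $T(M)$ a priori a forest, each dot having at most one immediate parent (above or to the left), provided no dot accidentally inherits both an immediate above-neighbor and an immediate left-neighbor among the newly placed internal dots together with the leaves; ruling this out is exactly what the tree property of $S$ achieves through the cell-edge correspondence of Lemma~\ref{lem:cell2edge}. Once this is established, connectedness from Lemma~\ref{lem:paths} upgrades the forest to a tree, completing the verification and thereby the bijection.
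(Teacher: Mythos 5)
Your overall architecture (forward direction by edge count plus connectedness, inverse construction for surjectivity) matches the paper's, and the forward direction and injectivity are fine. The gap is in your treatment of Condition~\eqref{mnab-cond2} for the surjectivity step, specifically in your final paragraph. You propose to show $T(M)$ is acyclic by arguing that the tree property of $S$ rules out any dot having both an immediate above-neighbour and an immediate left-neighbour, so that each dot has at most one parent and $T(M)$ is a forest. This claim is false: by Lemma~\ref{lem:1root}, a \cmnab\ has such a dot precisely when it is multirooted, and multirooted \cmnabs\ genuinely occur as preimages of spanning trees --- the element $M\in\mathcal{M}_{465213}$ of Figure~\ref{fig:map_zeta} is multirooted yet $\zeta(M)$ is the spanning tree shown there. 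Indeed, by Proposition~\ref{prop:root-ext} every spanning tree with positive external activity produces a dot with neighbours both above and to the left, so your proposed mechanism would wrongly exclude most spanning trees. Your earlier ``two distinct paths'' version of the acyclicity argument is also not directly licensed by Lemma~\ref{lem:paths}, since a cycle in $T(M)$ need not be an alternating sequence of leaf and internal dots (two internal dots can be adjacent in the same row or column), so extra work would be needed to convert a cycle in $T(M)$ into a cycle in $S$.

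The paper avoids all of this with a counting argument: the constructed $M$ has $2n-1$ dots and each of the $n-1$ internal dots contributes exactly two edges (to the dot immediately below and the dot immediately to its right), giving $2n-2$ edges in total; a connected graph on $2n-1$ vertices with $2n-2$ edges is automatically a tree, so only connectedness (which Lemma~\ref{lem:paths} supplies) needs to be checked. I would recommend replacing your acyclicity discussion with this count.
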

\begin{proof}
  Let $\pi$ be an $n$-permutation.  By Lemma~\ref{lem:cell2edge} and the fact that every element of $\mpi$ has $n-1$ internal dots, $\zeta$ is a map from $\mpi$ to the set of subgraphs of $G_\pi$ with $n-1$ edges.  To show that those edges form a spanning tree it therefore suffices to show that they form a connected graph, which follows from Lemma~\ref{lem:paths}.

Conversely, if $S$ is a spanning tree of $G_\pi$, place a dot in cell $(i,j)$ of $\tpi$ for each edge $(i,\pi_j)$ of $S$, where $i<\pi_j$.  By Lemma~\ref{lem:cell2edge}, this places $n-1$ internal dots in $\tpi$ and each of those dots contributes two edges to the graph in $\tpi$, connecting to a dot below and to the right, a total of $2n-2$ edges, in a graph with $2n-1$ vertices.  To show that this graph is a tree it again suffices to show that it is connected, which again follows from Lemma~\ref{lem:paths}.
\end{proof}

If a \cmnab\ has a unique root then the definition is equivalent to that of a \cnab, as we will now show.

\begin{lemma}\label{lem:1root}
A \cmnab\ $M$ has a dot with a dot to the left and above if and only if it has more than one root.
\end{lemma}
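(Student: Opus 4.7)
The plan is to prove this by a direct edge-count, exploiting the fact that $T(M)$ is a tree and relating the number of edges to the local ``up/left-neighbor'' incidence pattern of the dots.

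First I would count dots: since $M \in \mpi$ for $\pi \in \clsn$, the diagram $\tpi$ contributes $n$ leaf dots and by definition $n-1$ internal dots are added, giving $2n-1$ dots in total. Since $T(M)$ is a tree, it has exactly $(2n-1)-1 = 2n-2$ edges.

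Next I would rewrite the edge count in terms of a local statistic. By the construction, the edges of $T(M)$ are precisely the pairs $(d,d')$ where $d'$ is the dot immediately above $d$ in its column, or the dot immediately to the left of $d$ in its row. For each dot $d$, let $u(d) \in \{0,1\}$ record whether $d$ has a dot immediately above, and $\ell(d) \in \{0,1\}$ whether it has a dot immediately to the left. Each edge is counted exactly once in $\sum_d (u(d) + \ell(d))$, since the endpoint of the edge that lies below/right supplies the contribution. Partitioning the dots into the four classes
\begin{align*}
R &= \{d : u(d)=\ell(d)=0\}, & A &= \{d : u(d)=1,\ell(d)=0\}, \\
B &= \{d : u(d)=0,\ell(d)=1\}, & C &= \{d : u(d)=\ell(d)=1\},
\end{align*}
and letting $r,a,b,c$ be the respective cardinalities, we have $r+a+b+c = 2n-1$ and $a+b+2c = 2n-2$.

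Subtracting these two identities gives $r - c = 1$, i.e.\ $c = r-1$. The right-hand side of the lemma (``more than one root'') is $r \geq 2$, while the left-hand side (``a dot with a dot to the left and above'') is $c \geq 1$; these are equivalent, which concludes the argument. The main (minor) obstacle is justifying that the root set of $T(M)$ coincides with $R$: the roots are by definition the dots with no dot to the left or above \emph{at all}, but since the up- and left-paths in $M$ are finite, having a dot somewhere above (resp.\ to the left) forces a dot to be immediately above (resp.\ immediately to the left) only when intermediate dots don't exist; one checks by an easy induction along the column (resp.\ row) that a dot has some dot above iff it has a dot immediately above, so $R$ indeed equals the set of roots of $T(M)$.
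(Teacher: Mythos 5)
Your proof is correct, and it takes a genuinely different route from the paper's. The paper argues both directions by tracing paths in the diagram: given a dot with a dot above and a dot to the left, it follows two zig-zag paths upward/leftward to reach two roots that must be distinct (else $T(M)$ would contain a cycle); conversely, given two roots, it inspects the tree-path between them and locates a turn that forces a dot with neighbours both above and to the left. Your argument instead is a global count: every edge of $T(M)$ is charged to its lower/right endpoint, so $|E(T(M))| = a + b + 2c$, and combining this with $|V(T(M))| = 2n-1 = r+a+b+c$ and the tree identity $|E| = |V|-1$ yields $r = c+1$, from which the equivalence $r \ge 2 \Leftrightarrow c \ge 1$ is immediate. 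Your bookkeeping is sound — the only point needing care is that the charging is injective (a vertical edge cannot coincide with a horizontal one, and no two dots charge the same edge), which holds since two distinct cells cannot share both a row and a column; and your identification of the roots with the class $R$ is exactly the paper's definition of root. What your approach buys is a stronger, quantitative statement (the number of roots always exceeds the number of ``doubly-parented'' dots by exactly one), at the cost of being less constructive: the paper's proof exhibits the two roots, respectively the offending dot, explicitly, which is closer in spirit to how these objects are manipulated elsewhere in Section~\ref{sec:CNABM}.
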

\begin{proof}
Suppose $M$ has a dot $d$ with a dot $a$ above and a dot $\ell$ to the left.  Tracing a zig-zag path from $d$ through $a$ to the topmost dot in their column, then to the leftmost dot in that row, and so on, we must end up at a dot with no dot above or to the left, which is a root dot~$d_1$. Tracing analogously through $\ell$ we will end up at a root dot $d_2$. These two root dots must be distinct, for otherwise we would have traced out a cycle in the tree $T(M)$.

Suppose then that $M$ has (at least) two distinct root dots $\ell$ and $h$, which then must be in different rows, say $h$ in a higher row.  The unique path from $\ell$ to $h$
in the tree $T(M)$ must contain an up-step, but start with a right or a down step. Consider the maximal sequence of right and down steps in the beginning of that path.  If the last step in that sequence was a right step the next step must be up, if it was a down step the next step must be left.  In either case we have found a dot with a dot to the left and above.
\end{proof}

\begin{prop}\label{prop:cnabs-cmnabs}
The \cmnabs\ with a single root are precisely the \cnabs.
\end{prop}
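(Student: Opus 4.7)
The plan is to establish the equivalence by showing each inclusion separately, leveraging Lemma~\ref{lem:1root} together with the completeness conditions built into the two definitions.

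For the forward direction, suppose $M \in \mpi$ has a single root. By Lemma~\ref{lem:1root}, no dot of $M$ has both a dot above and a dot to its left; this yields condition~(\ref{nab:2}) of Definition~\ref{def:NAB} as soon as the root is placed at $(1,1)$. I would locate the root via the following short argument: the topmost dot of column~$1$ is a root (nothing above it, nothing to its left since it sits in column~$1$), and the leftmost dot of row~$1$ is similarly a root; uniqueness forces both to coincide at cell $(1,1)$. Condition~(\ref{nab:1}) is immediate because $\tpi$ provides one dot per row and per column. It remains to check completeness: every dot must either have a dot both below and to its right (internal case) or neither (leaf case). Internal dots satisfy this by Definition~\ref{def:cmNAB}(\ref{mnab-cond1}). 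For a $\tpi$-leaf at $(\pi_i,i)$, I would observe that the bottommost dot in column~$i$ cannot be an internal dot (which by definition would require a further dot below), so it must be the $\tpi$-leaf itself, and in particular no dot of $M$ lies below it; a symmetric argument rules out dots to its right.

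For the converse, let $N$ be a \cnab. Since \cnabs\ live on square Ferrers shapes by the lemma proved earlier in Subsection~\ref{sec:CNAB}, $N$ sits on an $n\times n$ grid and contains $2n-1$ dots. I would split the dots into \emph{leaves} (neither a dot below nor a dot to the right) and \emph{internal dots} (a dot both below and to the right), these being the only two possibilities by completeness. The rightmost dot of any row has no dot to its right and so is a leaf, and a row cannot contain two leaves since both would need to be rightmost. Hence each row has exactly one leaf, and dually each column has exactly one, so the leaves form a permutation matrix, identifying $N$ with $\tpi$ for a unique $\pi \in \clsn$. The remaining $n-1$ dots satisfy condition~(\ref{mnab-cond1}) of Definition~\ref{def:cmNAB} by completeness; connectedness, which yields condition~(\ref{mnab-cond2}), follows because in any \nab\ every non-root dot is joined to the $(1,1)$-dot via its chain of parents. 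Finally $N$ has exactly one root: the $(1,1)$-dot is a root, and every other dot has a dot above or to its left by Definition~\ref{def:NAB}(\ref{nab:2}), so is not.

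The main obstacle I anticipate is the bookkeeping between $\tpi$-leaves and added internal dots in the forward direction: nothing in the \cmnab\ axioms explicitly forbids an internal dot from lying below or to the right of a $\tpi$-dot, and the argument via the bottommost/rightmost dot of each line is essential to rule this out. Every remaining step is a direct check against the definitions.
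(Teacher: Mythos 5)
Your proof is correct and follows essentially the same route as the paper: Lemma~\ref{lem:1root} handles the single-root-to-\cnab\ direction, and the leaf/internal-dot decomposition handles the converse. You additionally verify details the paper leaves implicit (locating the root at $(1,1)$ and checking completeness of the resulting filling via the bottommost/rightmost-dot argument), which is a sound and welcome supplement rather than a different approach.
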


\begin{proof}
Every row in a \cnab\ has a unique leaf dot, which is also the unique leaf dot in its column, and this accounts for $n$ dots. The remaining $n-1$ dots are internal and satisfy Condition~\eqref{mnab-cond1} in Definition~\ref{def:cmNAB}, and \cnabs\ satisfy Condition~\eqref{mnab-cond2} in Definition~\ref{def:cmNAB}, so every \cnab\ is a \cmnab.  Conversely, if a \cmnab\ $M$ has a single root, then by Lemma~\ref{lem:1root} no dot has a dot to the left and above, and so $M$ satisfies Condition~\eqref{nab:2} in Definition~\ref{def:NAB} (and Condition~\eqref{nab:1} by definition) and thus is a \cnab.
\end{proof}

We can use the map $\zeta$ to map the \cnabs\ on $\tpi$ to the minimal recurrent configurations on~$G_\pi$.  For the following lemma we order the edges of $G_\pi$ reverse lexicographically by coordinates of the corresponding cells in $\tpi$ (see Definition~\ref{def:activity} of external activity).

\begin{prop}\label{prop:root-ext}
Let $M$ be a \cmnab. There is a unique root in $M$ if and only if $\zeta(M)$ has no external activity.
\begin{proof}
If $M$ has more than one root, then Lemma~\ref{lem:1root} implies there exists a dot $d$ in $M$ with a dot $a$ above and dot $\ell$ to the left.  Let $c$ be the cell that completes the rectangle of $a,\ell$ and~$d$.  Then $c$ corresponds to an edge external to the spanning tree $S$ and adding it to $S$ creates a cycle with edges corresponding to $a,\ell,c$ and $d$, by Lemma~\ref{lem:paths}. Since the edge corresponding to $c$ is ordered last of these edges it is externally active.
 
If $e$ is an externally active edge then adding it creates a cycle in the spanning tree $S$, which corresponds to a cycle of internal dots in $M$. Such a cycle must contain a dot with a dot to the left and a dot above. However, as $e$ is externally active it must be ordered last in its cycle in~$S$ so the corresponding dot in $M$ is weakly northwest of all other dots in the cycle. Thus the addition of the dot corresponding to $e$ cannot cause one of the pre-existing dots to have a dot to the left and above.  Therefore, such a dot must already exist in the cycle, so $M$ has a cell with a dot to the left and a dot above, which implies $M$ is multirooted, by Lemma~\ref{lem:1root}.
\end{proof}
\end{prop}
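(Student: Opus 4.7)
The proof splits into two directions. For the forward direction ($M$ multi-rooted $\Rightarrow$ $\zeta(M)$ has external activity), I would apply Lemma~\ref{lem:1root} to obtain a dot $d \in M$ with a dot $a$ above in its column and a dot $\ell$ to its left in its row. Let $c$ be the cell at the intersection of $a$'s row and $\ell$'s column. I would first argue $c$ is empty in $M$: otherwise $a,c,\ell,d$ would form a four-cycle of row/column adjacencies in $T(M)$, contradicting that $T(M)$ is a tree. By Lemma~\ref{lem:cell2edge}, $c$ corresponds to an edge $e_c$ of $G_\pi$, and $e_c \notin \zeta(M)$ since $c \notin M$. By Lemma~\ref{lem:paths}, adding $e_c$ to $\zeta(M)$ yields a cycle traced by a closed sequence in $M \cup \{c\}$ that passes through the cells $a, \ell, c, d$. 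Since $c$ is strictly NW of the other three and the reverse lex order ranks NW cells as maximal, $c$ is the largest edge in this cycle, so $e_c$ is externally active.

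For the backward direction (external activity $\Rightarrow$ $M$ multi-rooted), let $e_c$ be externally active with cell $c = (i,j)$, and let $S$ be the set of cells in the closed sequence traced in $M \cup \{c\}$ by the cycle in $\zeta(M) \cup \{e_c\}$ (via Lemma~\ref{lem:paths}). The goal is to exhibit $d \in S \setminus \{c\}$ such that $d$ has a dot in $M$ above in its column and a dot in $M$ to its left in its row, so that Lemma~\ref{lem:1root} yields multi-rootedness. Let $R$ be the set of row-leftmost cycle-cells (one per row hit) and $T$ the column-topmost (one per column hit). External activity forces $c$ to be weakly NW of every other cycle-cell, so $c \in R \cap T$. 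Since each of the $m$ cycle-vertices contributes exactly one row and one column to $S$ (via its leaf), one has $|R| = |T| = m$ and $|S| = 2m$, giving $|S \setminus (R \cup T)| = |R \cap T| \geq 1$. Any $d \in S \setminus (R \cup T)$ has a cycle-cell above it in its column and a cycle-cell to its left in its row.

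The principal obstacle is guaranteeing these above/left cycle-cells are in $M$, not equal to $c$. I would first verify no cell in $S \setminus (R \cup T)$ lies in row $i$: the row-$i$ cycle-cells are $c$, the leaf $\ell_i$, and the other internal at vertex $i$ (whose cycle-edge is forced to go to a larger vertex by external activity), and a short case check shows all three lie in $R \cup T$. For column $j$, if a cycle-internal $d = (u',j)$ were in $S \setminus (R \cup T)$ then an internal $d''$ to its left in row $u'$ must exist and one can check $d'' \in R \cap T$, forcing $|R \cap T| \geq 2$ and hence producing a second cell in $S \setminus (R \cup T)$ outside column $j$. For any such $d$ outside both row $i$ and column $j$, the cycle-cells above $d$ and to its left sit in a column $\neq j$ and a row $\neq i$ respectively, so neither can be $c$ and both lie in $M$; Lemma~\ref{lem:1root} then concludes the proof.
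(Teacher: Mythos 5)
Your proof follows the same overall strategy as the paper's. In the forward direction you complete the rectangle on the configuration supplied by Lemma~\ref{lem:1root} and observe that the completing cell $c$, being weakly northwest of $a,\ell,d$, is maximal in the reverse-lexicographic order on the four edges of the fundamental cycle and hence externally active (your extra check that $c$ is empty, so that $e_c\notin\zeta(M)$, is a detail the paper glosses over, and is correct). In the backward direction you locate a cell of the fundamental cycle with a cycle cell above it and one to its left, and use the weak-northwest position of $c$ to force that configuration to live entirely in $M$, so that Lemma~\ref{lem:1root} applies. The one genuine addition over the paper is your counting argument $|S\setminus(R\cup T)|=|R\cap T|\ge 1$: the paper simply asserts that such a cycle must contain a dot with a dot to the left and a dot above, and your inclusion--exclusion over the $m$ row-leftmost and $m$ column-topmost cells among the $2m$ cycle cells is a clean justification of that assertion. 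On the other hand, your final step is more complicated than necessary: the row-$i$ and column-$j$ case analyses (with their deferred ``short case checks'', which do go through but are not spelled out) can be replaced by the single observation that no cell $d\in S\setminus(R\cup T)$ can share a row or a column with $c$ at all --- if $d$ were in row $i$, its above-witness $a$ would satisfy $\mathrm{row}(a)<\mathrm{row}(d)=\mathrm{row}(c)$, contradicting that $c$ is weakly northwest of every cycle cell, and symmetrically for columns. This is exactly the paper's remark that adding the dot for $e$ cannot cause a pre-existing dot to acquire both a dot to its left and a dot above. With that simplification your argument and the paper's coincide.
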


Note that in this case, unlike in Section~\ref{sec:Tutte}, the order of the edges of 
$G=G_{\pi}$ is fixed \textit{a priori} (it does not depend on the tree $T$). It is known (see~\cite{Tutte}) that in this case we have
$$
\tut_G(x,y) = \sum\limits_T x^{\Int(T)} y^{\Ext(T)},
$$
where the sum is over all spanning trees of $G$, and thus by Proposition~\ref{pro:Tutte_level} the spanning trees with no external activity are in bijection with the minimal recurrent configurations. Therefore, Proposition~\ref{prop:cnabs-cmnabs} and Proposition~\ref{prop:root-ext} imply the following.
\begin{corollary}\label{cor:1root-minrec}
The elements of $\mpi$ with a single root are in bijection with 
the minimal recurrent configurations of $G_\pi$.
\end{corollary}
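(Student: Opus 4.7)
The plan is essentially to assemble the ingredients already at hand: the bijection $\zeta$ from $\mpi$ to spanning trees of $G_\pi$, Proposition~\ref{prop:root-ext} characterizing single-root CMNABs via external activity, the classical Tutte identity $\tut_G(x,y)=\sum_T x^{\Int(T)} y^{\Ext(T)}$ (valid for the fixed edge order specified just before the corollary), and Proposition~\ref{pro:Tutte_level} identifying the level polynomial with $\tut_G(1,x)$.

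First, I would observe that by Proposition~\ref{prop:root-ext}, restricting $\zeta$ to the single-root CMNABs yields a bijection from this subset of $\mpi$ onto the set of spanning trees of $G_\pi$ having no externally active edges (with respect to the fixed reverse-lexicographic order on the cells of $\tpi$). Second, setting $x=1$ in the Tutte identity gives $\tut_G(1,y) = \sum_T y^{\Ext(T)}$, so extracting the constant term in $y$ shows that the number of such spanning trees equals $\tut_G(1,0)$. Third, by Proposition~\ref{pro:Tutte_level}, $\Levelpoly{G_\pi}{s}{y} = \tut_G(1,y)$, and evaluating at $y=0$ gives $\vert \MinRec{s}{G_\pi} \vert = \tut_G(1,0)$. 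Combining these three facts, the single-root elements of $\mpi$ are equinumerous with $\MinRec{s}{G_\pi}$, hence in bijection with it.

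If a canonical bijection is wanted rather than just equality of cardinalities, I would post-compose the restricted map $\zeta$ with the inverse of $\ttc$ from Theorem~\ref{thm:main_result}; this is meaningful once one knows that a spanning tree has zero external activity under the fixed edge order if and only if $\ttc(T)$ lies in $\MinRec{s}{G_\pi}$, a statement of equinumeracy already supplied by the Tutte argument above.

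The main subtlety, and the one I would flag carefully, is precisely this tension between the two notions of external activity appearing in the paper: Theorem~\ref{thm:ext_activity_level} relates $\level{\ttc(T)}$ to the external activity with respect to the \emph{tree-dependent} order $\prec_T$, whereas the Tutte identity invoked here uses the \emph{fixed} reverse-lexicographic order. The corollary does not need a tree-by-tree match between these notions; it only needs that both yield the same count of ``zero-activity'' spanning trees, which is automatic from Proposition~\ref{pro:Tutte_level} and the fact that $\tut_G(1,0)$ is an intrinsic graph invariant independent of any edge ordering. Thus the bijection is obtained cleanly at the level of enumeration, and no separate combinatorial reconciliation of the two orders is required.
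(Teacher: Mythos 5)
Your proof is correct and follows essentially the same route as the paper: combine Proposition~\ref{prop:root-ext} with the classical Tutte identity for the fixed edge order and Proposition~\ref{pro:Tutte_level} to conclude that single-root elements of $\mpi$ and minimal recurrent configurations are equinumerous, hence in bijection. The paper likewise obtains only this enumerative correspondence (it immediately poses the problem of finding a nice bijective proof afterwards), and your closing observation that no tree-by-tree reconciliation of the two edge orders is required is exactly the right reading.
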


\begin{problem}
Find a nice bijective proof of Corollary~\ref{cor:1root-minrec}.
\end{problem}

\begin{remark}
In~\cite{DGGS}, the authors provided a new interpretation of the sequence $A002190 = 1,1,4,33,456,9460,\ldots$
 in~\cite{oeis} counting complete non-ambiguous binary trees, in terms of fully tiered trees with weight $0$. Section~\ref{sec:minrec} and Corollary~\ref{cor:1root-minrec} provide another two combinatorial interpretations to this sequence:
\begin{itemize}
\item as the sum $\sum\limits_{ \pi \in \bar{\cls}_n} \vert \MinRec{s}{G_{\pi}}\vert$, where $\bar{\cls}_n$ is the set of indecomposable permutations of length $n$.
\item as the number of pairs $(\pi,P)$ where $\pi \in \bar{\cls}_n$ and $P$ is a $(\pi,s)$-compatible ordered partition of $n$.
\end{itemize}
\end{remark}


\section{Specialisations}\label{sec:specialisations}

\subsection{The Ferrers case}\label{sec:Ferrers}

In this section, we are interested in the case where the permutation~$\pi$ has a single descent, in which case the permutation graph $G_{\pi}$ is a Ferrers graph. In this case the spanning trees of the permutation graph are the intransitive trees introduced by Postnikov~\cite{Post}. As such, we recover results from~\cite[Section 5.3]{DSSS}.

A \emph{Ferrers graph} (see~\cite{ew-ferrers}) is a bipartite graph whose vertices of 
each part are labeled $t_1,t_2,\ldots,t_k$ and $b_1,b_2,\ldots,b_m$, respectively, satisfying the following conditions:
\begin{enumerate}
\item\label{fg1} If $(t_i,b_j)$ is an edge with $r\le i$ and $s\le j$, then $(t_r,b_s)$ is also an edge.
\item\label{fg2} Both $(t_1,b_m)$ and  $(t_k,b_1)$ are edges.
\end{enumerate}
As illustrated in the example in Figure~\ref{fig:Ferrers_permgraph}, we think of the 
vertices $t_i$ as ``top'' vertices, and the~$b_i$ as ``bottom'' vertices. Note that when read from left to right the labels on the top vertices are increasing  but decreasing for the bottom vertices. Thus, condition~\eqref{fg1} above says that a top vertex must have edges to all vertices that any vertex to its right does, and likewise for bottom vertices.

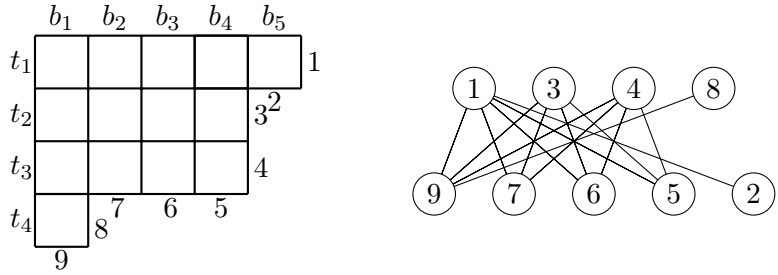
\begin{figure}[h]
  \centering
  \begin{tikzpicture}[scale=0.35]
\begin{scope}[shift={(-2,0)}]
    \draw[step=2cm,thick] (0,2) grid (8,8);
    \draw[step=2cm,thick] (6,6) grid (10,8);
    \draw[step=2cm,thick] (0,0) grid (2,2);
    
    \node at (10.5,7) {$1$};
    \node at (9,5.5) {$2$};
    \node at (8.5,5) {$3$};
    \node at (8.5,3) {$4$};
    \node at (7,1.5) {$5$};
    \node at (5.1,1.5) {$6$};
    \node at (3.1,1.5) {$7$};
    \node at (2.5,.7) {$8$};
    \node at (1,-.5) {$9$};
    
    \node at (-0.5,7) {$t_1$};
    \node at (-0.5,5) {$t_2$};
    \node at (-0.5,3) {$t_3$};
    \node at (-0.5,1) {$t_4$};
    \node at (1,8.7) {$b_1$};
    \node at (3,8.7) {$b_2$};
    \node at (5,8.7) {$b_3$};
    \node at (7,8.7) {$b_4$};
    \node at (9,8.7) {$b_5$};
\end{scope}  

\begin{scope}[shift={(13,2)}]
	\draw (1.5,4)--(0,0)--(1.5,4)--(3,0)--(1.5,4)--(6,0)--(1.5,4)--(9,0)--(1.5,4)--(12,0);
	\draw (4.5,4)--(0,0)--(4.5,4)--(3,0)--(4.5,4)--(6,0)--(4.5,4)--(9,0);
	\draw (7.5,4)--(0,0)--(7.5,4)--(3,0)--(7.5,4)--(6,0)--(7.5,4)--(9,0);
	\draw (10.5,4)--(0,0);
	\draw [fill=white] (1.5,4) circle [radius=0.8];
	\draw [fill=white] (4.5,4) circle [radius=0.8];
	\draw [fill=white] (7.5,4) circle [radius=0.8];
	\draw [fill=white] (10.5,4) circle [radius=0.8];
	\draw [fill=white] (0,0) circle [radius=0.8];
	\draw [fill=white] (3,0) circle [radius=0.8];
	\draw [fill=white] (6,0) circle [radius=0.8];
	\draw [fill=white] (9,0) circle [radius=0.8];
	\draw [fill=white] (12,0) circle [radius=0.8];
	\node at (1.5,4) {$1$};
	\node at (4.5,4) {$3$};
	\node at (7.5,4) {$4$};
	\node at (10.5,4) {$8$};
	\node at (0,0) {$9$};
	\node at (3,0) {$7$};
	\node at (6,0) {$6$};
	\node at (9,0) {$5$};
	\node at (12,0) {$2$};
\end{scope}   
\end{tikzpicture}
  \caption{Example of a Ferrers diagram, the labeling of its South-East border, and the corresponding labeled Ferrers graph, which is exactly the permutation graph $G_{256791348}$.\label{fig:Ferrers_permgraph}}
\end{figure}

Given a Ferrers diagram with rows labeled from top to bottom with $t_1,t_2,\ldots,t_k$ and
columns labeled with $b_1,b_2,\ldots,b_m$ from left to right,
there is a unique Ferrers graph whose vertices are labeled with the $t_i$ and $b_j$ and where $(t_i,b_j)$ is an edge if and only if the diagram has a cell in row $t_i$ and column $b_j$.  This correspondence is clearly one-to-one.

Given a permutation $\pi \in \clsn$, we say that the pair $\pi_i,\pi_{i+1}$ is a \emph{descent} of $\pi$ if $\pi_i > \pi_{i+1}$.

\begin{prop}\label{pro:Ferrers_single_descent}
Let $G$ be a graph on $n$ vertices. Then $G$ is a Ferrers graph if and only if there exists an indecomposable permutation $\pi$ with exactly one descent such that $G \simeq G_{\pi}$.
\end{prop}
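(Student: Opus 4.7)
The plan is to prove the two directions of the equivalence separately, using in both that the bipartition of a Ferrers graph corresponds to the two increasing runs of a single-descent permutation.

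For the direction $(\Leftarrow)$, let $\pi \in \clsn$ be indecomposable with its unique descent at position $k$. The plan is to take as bottom vertices of $G_\pi$ the values in the first run $\{\pi_1, \ldots, \pi_k\}$ relabeled in decreasing order $b_1 > \cdots > b_k$, and as top vertices the values in the second run $\{\pi_{k+1}, \ldots, \pi_n\}$ relabeled in increasing order $t_1 < \cdots < t_{n-k}$. Since inversions cannot lie within either run, the edges of $G_\pi$ are exactly the pairs $(b_j, t_i)$ with $b_j > t_i$, from which condition~\eqref{fg1} follows immediately from the monotonicity of the two labelings. The only place indecomposability enters is condition~\eqref{fg2}: should $b_k > t_1$ fail, then $\pi_1 = b_k$ would be the global minimum, giving $\pi_1 = 1$ and $\{\pi_1\} = [1]$; should $b_1 > t_{n-k}$ fail, then $\pi_n = t_{n-k}$ would be the global maximum, giving $\pi_n = n$ and $\{\pi_1, \ldots, \pi_{n-1}\} = [n-1]$. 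Both contradict indecomposability.

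For the direction $(\Rightarrow)$, given a Ferrers graph $G$ with top vertices $t_1, \ldots, t_\ell$ and bottom vertices $b_1, \ldots, b_m$ (so $n = \ell + m$), the plan is to choose the labels $\{t_i\} \cup \{b_j\} = [n]$ using the canonical labeling coming from the SE border of the associated Young diagram: walk the border from NE to SW and label the $n$ unit steps with $1, 2, \ldots, n$, assigning south steps to rows $t_1, t_2, \ldots, t_\ell$ and west steps to columns $b_m, b_{m-1}, \ldots, b_1$ in the order encountered. A short observation shows the first step (at the NE corner, along the east edge of the top row) must be south, so $t_1 = 1$; and the last step (at the SW corner, along the south edge of the leftmost column) must be west, so $b_1 = n$. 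I would then define
$$ \pi := b_m \, b_{m-1} \cdots b_1 \, t_1 \, t_2 \cdots t_\ell, $$
which has a single descent at position $m$ (since $b_1 > t_1$ by condition~\eqref{fg2}), and whose inversions match the edges of $G$ exactly by the same matching as in the $(\Leftarrow)$ direction.

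The main obstacle will be verifying that this $\pi$ is indecomposable, which is precisely where the choices $t_1 = 1$ and $b_1 = n$ pay off. I would argue by cases on $j < n$: for $j \le m$, the set $\{\pi_1, \ldots, \pi_j\}$ consists of $j$ of the $b$'s, none of which can equal $1 = t_1$, so it cannot equal $[j]$; for $j > m$, the set $\{\pi_1, \ldots, \pi_j\}$ contains $b_1 = n > j$, so it again cannot equal $[j]$. Thus no proper initial segment of $\pi$ is a permutation of $[j]$, and $\pi$ is indecomposable.
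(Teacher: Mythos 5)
Your proof is correct and follows essentially the same route as the paper: split $\pi$ into its two increasing runs, identify them with the columns and rows of a Ferrers diagram via the NE-to-SW border labeling, and match inversions with cells/edges. The only (minor) differences are that you verify the Ferrers-graph axioms \eqref{fg1}--\eqref{fg2} directly on the bipartition rather than passing through the diagram in the forward direction, and that you spell out the indecomposability argument explicitly in both directions, whereas the paper derives it from connectivity of Ferrers graphs in the converse.
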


\begin{proof}
Suppose that $\pi$ is indecomposable and has a single descent. Then we can decompose~$\pi$ in a unique way as $\pi= \pi_1 \pi_2$, where $\pi_1$ and $\pi_2$ are increasing subsequences of $[n]$ and the last letter of $\pi_1$ is strictly greater than the first letter of $\pi_2$. Since $\pi$ is indecomposable, this implies that the the last letter of $\pi_1$ is $n$, and the first letter of $\pi_2$ is $1$. Now we let $F=F(\pi)$ be the Ferrers diagram 
defined as follows. Label the edges on the South-East border of $F$ from North-East to South-West in the order $1,2,\ldots,n$, and let the step labeled $k$ be a South step if $k \in \pi_2$ and a West step if $k \in \pi_1$. This defines a Ferrers diagram since $1 \in \pi_2$. Then the edges of the corresponding Ferrers graph $G(F)$ are the pairs $(i,j)$ where $i$ is a column label (a West step), $j$ a row label (a South step), and $i>j$, that is, exactly the inversions of~$\pi$. Thus $G_{\pi} \simeq G(F)$. 

For the converse, suppose $G=G(F)$ is the Ferrers graph corresponding to the Ferrers diagram $F$, whose South-East border is labeled as before. Let $\pi_1$ and $\pi_2$ be the words formed of West steps and East steps, respectively, of that border, each in increasing order. Then $\pi = \pi_1 \pi_2$ is a permutation of length $n$ with exactly one descent, and as above, we have $G \simeq G_{\pi}$, as desired (that $\pi$ is indecomposable follows from the fact that a Ferrers graph is connected).
\end{proof}

Figure~\ref{fig:Ferrers_permgraph} illustrates the construction in the proof of Proposition~\ref{pro:Ferrers_single_descent}. Thus, Ferrers graphs can be viewed as permutation graphs where the permutation has a single descent. 

\begin{remark}
It is possible for a permutation with more than one descent to yield a Ferrers graph. For instance, the graph corresponding to the permutation $3142$ is isomorphic to $P_4$, the path graph on $4$ vertices, which is a Ferrers graph. Indeed, $P_4$ is the Ferrers graph corresponding to the diagram whose row lengths are $(2,1)$, or equivalently, it is isomorphic to the permutation graph $G_{2413}$.  
\end{remark}

We revisit Theorem~\ref{thm:main_result} in the context of Ferrers graphs. Let $\pi \in \clsn$ be an indecomposable permutation with a single descent, and $G=G_{\pi}$ the corresponding Ferrers graph. We decompose $\pi$ into $\pi_1 \pi_2$ as before, where $\pi_1$ and $\pi_2$ are two increasing sequences such that the last letter of $\pi_1$ is $n$ and the first letter of $\pi_2$ is $1$. We write $A_1$ and $A_2$ for the unordered set of labels appearing in $\pi_1$ and $\pi_2$, respectively. For $j\in \{1,2\}$, we set $\bar{j} := 3-j$, so that if $j=1$, $\bar{j} = 2$ and vice versa.

\begin{lemma}\label{lemma:tree_levels}
Let $s \in [n] $ be a distinguished vertex of $G=G_\pi$ where $\pi$ has a single descent, and let $j \in \{1,2\}$ be such that $s \in A_j$. Let $T$ be a spanning tree of $G$, rooted at $s$. Then for any $k \geq 0$, we have:
\begin{itemize}
\item $T^{(2k)} \subseteq A_j$.
\item $T^{(2k+1)} \subseteq A_{\bar{j}}$.
\end{itemize}
\end{lemma}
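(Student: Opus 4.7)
The plan is to observe that $G_\pi$ is bipartite with parts $A_1$ and $A_2$, and then deduce the lemma by a routine induction on $k$.

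First I would establish bipartiteness. Suppose $a,b \in A_j$ with $a<b$. Since $\pi_j$ is an increasing subsequence, $a$ appears before $b$ in $\pi_j$, hence before $b$ in $\pi$, so $(a,b)$ is not an inversion of $\pi$. Therefore no edge of $G_\pi$ has both endpoints in the same $A_j$, which shows $G_\pi$ is bipartite with parts $(A_1,A_2)$. In particular, every spanning tree $T$ of $G_\pi$ is bipartite with the same parts, since $T$ is a subgraph of $G_\pi$.

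Next I would proceed by induction on $k$, simultaneously proving both bullet points (i.e.\ proving by induction on $m \geq 0$ that $T^{(m)} \subseteq A_j$ when $m$ is even and $T^{(m)} \subseteq A_{\bar j}$ when $m$ is odd). The base case $m=0$ is immediate: $T^{(0)} = \{s\}$ and $s \in A_j$ by hypothesis. For the inductive step, let $v \in T^{(m)}$ with $m \geq 1$. Its parent $p(v)$ lies in $T^{(m-1)}$, and $(v,p(v))$ is an edge of $T$, hence an edge of $G_\pi$. By the induction hypothesis, $p(v)$ lies in $A_j$ or $A_{\bar j}$ according to the parity of $m-1$, and by bipartiteness $v$ lies in the opposite part, which is exactly what we need.

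There is no serious obstacle here; the only content is the bipartition of $G_\pi$, which follows immediately from the fact that $\pi$ has a single descent. Everything else is the standard observation that in a bipartite graph the two sides of the bipartition correspond to the two parity classes of heights in any rooted spanning tree.
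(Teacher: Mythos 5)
Your proof is correct and follows essentially the same route as the paper's: observe that every edge of $G_\pi$ joins $A_1$ to $A_2$ (the paper asserts this directly, you spell out why via the increasing subsequences), then induct on the height using the parent edge. No issues.
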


\begin{proof}
Any edge $e$ of $G$ is a pair $e=(x,y) \in A_1 \times A_2$. Thus if $e = (x,y) \in T^{(k)} \times T^{(k+1)}$ is an edge of $T$, we have that if $x \in A_j$ then $y \in A_{\bar{j}}$ and vice versa. Since $T^{(0)} = \{s\} \subseteq A_j$, the claim follows immediately by induction. 
\end{proof}

An immediate consequence of Lemma~\ref{lemma:tree_levels} is the following.

\begin{prop}\label{pro:mu=0}
Let $s \in [n] $ be a distinguished vertex of $G=G_\pi$ where $\pi$ has a single descent, and $T$ be a spanning tree of $G$, rooted at $s$. Then, for all $i\in[n]$, we have $\mu_i(T)=0$, where the $\mu_i(T)$ are defined as in Equation~\eqref{eq:def_mu} in Section~\ref{sec:main_thm}.
\end{prop}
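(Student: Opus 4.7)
The plan is to derive this immediately from Lemma~\ref{lemma:tree_levels} together with the bipartite structure of the Ferrers graph $G_\pi$. Recall that by construction every edge of $G_\pi$ joins a vertex of $A_1$ to a vertex of $A_2$, so $G_\pi$ is bipartite with parts $A_1$ and $A_2$, and in particular there is no edge of $G_\pi$ between two vertices lying in the same $A_j$.

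Fix $i\in[n]$ and let $k=h(i)$. By Lemma~\ref{lemma:tree_levels}, all vertices of $T^{(k)}$ lie in the same part $A_j$ or $A_{\bar j}$ (depending on the parity of $k$ relative to $j$). In particular, every vertex in $T^{(k)}$ lies in the same part of the bipartition as $i$. Hence no vertex in $T^{(k)}$ can be a neighbor of $i$ in $G_\pi$, which gives
\[
N_G(i)\cap T^{(h(i))}=\emptyset,
\]
so $\mu_i(T)=0$ by the definition in Equation~\eqref{eq:def_mu}.

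There is no real obstacle here: the content of the proposition is already packaged into Lemma~\ref{lemma:tree_levels}, and the only additional ingredient is the observation that Ferrers graphs are bipartite with parts $A_1,A_2$, so the proof reduces to a one-line invocation of the lemma.
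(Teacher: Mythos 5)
Your proof is correct and follows the same route as the paper: it is an immediate consequence of Lemma~\ref{lemma:tree_levels} combined with the fact that every edge of $G_\pi$ joins $A_1$ to $A_2$, so no two vertices at the same height in $T$ can be adjacent in $G$. The paper's own proof is exactly this one-line observation.
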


\begin{proof}
Given $k \geq 0$, Lemma~\ref{lemma:tree_levels} implies that  $G$ has no edges between two elements of $T^{(k)}$.
\end{proof}

We now show that in this case, there is a one-to-one correspondence between spanning 
trees of permutation graphs $G_\pi$ where $\pi$ has a single descent, and the intransitive trees first introduced by Postnikov~\cite{Post}. 
Let~$T$ be a labeled tree on the vertex set $[n]$. We say that $T$ is \emph{intransitive} if all its vertices are either local minima or local maxima.
Given a tree $T$, we denote by $\MIN(T)$ and $\MAX(T)$ its set of local minima and maxima, respectively. Thus~$T$ is an intransitive tree if and only if $\MIN(T), \MAX(T)$ forms a partition of $[n]$.

The following Proposition is essentially a re-writing of Proposition~\ref{pro:tieredtrees_permgraphs} in the case of permutations with a single descent, but we give a proof in the current context.

\begin{prop}
Let $\pi$ be an indecomposable permutation with a single descent. Write $\pi=\pi_1 \pi_2$ with $\pi_1$ and $\pi_2$ being, respectively, the increasing ordering of a set $A_1$ containing $n$ and of a set $A_2$ containing $1$. Let $T$ be a labeled tree on the vertex set $[n]$. Then $T$ is a spanning tree of $G_{\pi}$ if and only if $T$ is an intransitive tree with $\MAX(T)=A_1$ and $\MIN(T) = A_2$.
\end{prop}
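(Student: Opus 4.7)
The plan is to analyze edges of $G_{\pi}$ relative to the partition $[n] = A_1 \sqcup A_2$ and then deduce the characterization from an ``edges go between parts in a specific way'' observation, working each direction separately.

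First I would establish the key structural fact about edges of $G_{\pi}$: since $\pi = \pi_1\pi_2$ with every element of $A_1$ appearing before every element of $A_2$ in $\pi$, a pair $(i,j)$ is an inversion of $\pi$ iff $i \in A_1$, $j \in A_2$, and $i > j$. Thus every edge of $G_\pi$ joins a vertex of $A_1$ to a vertex of $A_2$, with the $A_1$-endpoint strictly larger than the $A_2$-endpoint; in particular $G_\pi$ is bipartite with parts $A_1, A_2$.

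For the forward direction, suppose $T$ is a spanning tree of $G_\pi$. Every edge of $T$ is an edge of $G_\pi$, so any vertex $i \in A_1$ has all its $T$-neighbors in $A_2$ and each strictly smaller than $i$, making $i$ a local maximum of $T$. Symmetrically, any $j \in A_2$ is a local minimum of $T$. Since $A_1 \sqcup A_2 = [n]$, this shows $T$ is intransitive with $\MAX(T) = A_1$ and $\MIN(T) = A_2$.

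For the converse, suppose $T$ is an intransitive tree on $[n]$ with $\MAX(T) = A_1$ and $\MIN(T) = A_2$. Any edge $(u,v)$ of $T$ must have one endpoint a local maximum and the other a local minimum — otherwise two adjacent vertices would both be local maxima (or both minima), contradicting $u \neq v$. So each edge has one endpoint in $A_1$ and one in $A_2$, and the $A_1$-endpoint is the larger one (being a local max). By the structural fact above, every such edge is an edge of $G_\pi$, so $T$ is a subgraph of $G_\pi$ on vertex set $[n]$ with $n-1$ edges and connected, hence a spanning tree. Neither direction presents a real obstacle; the only subtlety is noticing that in an intransitive tree no two adjacent vertices can lie in the same part of the $\MAX/\MIN$ partition, which is immediate from the definitions.
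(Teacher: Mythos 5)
Your proof is correct and takes essentially the same approach as the paper's: both identify the edges of $G_{\pi}$ as exactly the pairs $(a,b)$ with $a\in A_1$, $b\in A_2$ and $a>b$, deduce the forward direction by noting every $T$-neighbor of an $A_1$-vertex is smaller (and symmetrically for $A_2$), and get the converse from the fact that every edge of an intransitive tree joins a local maximum to a strictly smaller local minimum. The only difference is that you spell out the converse (why adjacent vertices cannot both be local maxima or both local minima) in slightly more detail than the paper does.
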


\begin{proof}
Suppose that $T$ is a spanning tree of $G=G_{\pi}$, and let $i \in A_1$. By construction, if $(i,j)$ is an edge of $G$, then $j \in A_2$ and $i>j$. In particular, all neighbors of $i$ in $T$ have labels strictly less than $i$, so that $A_1 \subseteq \MAX(T)$. Similarly, we have $A_2 \subseteq \MAX(T)$, and since $A_1,A_2$ forms a partition of $[n]$ this implies that $T$ is an intransitive tree with $\MAX(T)=A_1$ and $\MIN(T) = A_2$.
The converse follows from the fact that if $T$ is an intransitive tree, all its edges connect a local maximum $i$ to a local minimum $j$ with $i>j$.
\end{proof}

We now restate Theorem~\ref{thm:main_result} in this specialized context.
Let $\pi$ be an indecomposable permutation with a single descent. Write $\pi=\pi_1 \pi_2$ with $\pi_1$ and $\pi_2$ being, respectively, the increasing ordering of a set $A_1$ containing $n$ and of a set $A_2$ containing $1$. Let $G=G_{\pi}$ be the corresponding permutation graph (which is a Ferrers graph by Proposition~\ref{pro:Ferrers_single_descent}). Let $s \in [n]$ be a distinguished vertex of $G$, and $T$ a labeled tree on $[n]$, rooted at $s$. For $i \in [n]$, we define:
\begin{eqnarray}
\tilde{\lambda}_i(T) & := & \begin{cases}
\left\vert \{ j \in T^{\left(>h(i)\right)} \cap A_2 : \, j<i \} \right\vert, \quad \text{if } i \in A_1, \\
\left\vert \{ j \in T^{\left(>h(i)\right)} \cap A_1 : \, j>i \} \right\vert, \quad \text{if } i \in A_2.
\end{cases}
\label{eq:def_lambda_fer}\\
\tilde{\nu}_i(T) & := & 
\left\vert T^{\left(h(i) -1\right)} \cap (p(i),i) \right\vert
\label{eq:def_nu_fer},
\end{eqnarray}
where $p(i)$ is the parent of $i$ in the rooted tree $T$, and $(p(i),i)$ is the interval 
$[p(i)+1,i-1]$ if $p(i)<i$, and $(p(i),i)=[i+1,p(i)-1]$ if $i<p(i)$.

\begin{theorem}\label{thm:recconfig_intrantrees}
The map $T \mapsto c(T)$, with $c(T) \in \Config{G}$ defined by $c_i(T) := \tilde{\lambda}_i(T) + \tilde{\nu}_i(T)$, is a bijection from the set of intransitive trees $T$ such that $\MAX(T)=A_1$ and $\MIN(T)=A_2$, to the set $\Rec{s}{G}$ of recurrent configurations on $G$.

Moreover, we have $ \level{c(T)} = \sum\limits_{i=1}^n \left( \frac12 \tilde{\mu}_i(T) + \tilde{\nu}_i(T) \right) $, and $\CanonTop \left( c(T) \right) = T^{(0)},T^{(1)},\ldots$. 
That is, the canonical toppling of $c(T)$ is given by the breadth-first search of $T$.
\end{theorem}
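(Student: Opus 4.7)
The plan is to derive this theorem by specializing Theorem~\ref{thm:main_result} to the Ferrers case, invoking the bipartite structure throughout to simplify each of the three quantities in the formula $c_i = \lambda_i + \mu_i + \nu_i$. By Proposition~\ref{pro:Ferrers_single_descent} the graph $G = G_\pi$ is a Ferrers graph, and the preceding proposition identifies its spanning trees rooted at $s$ with the intransitive trees on $[n]$ having $\MAX(T) = A_1$ and $\MIN(T) = A_2$; so it suffices to show that the map $T \mapsto c(T)$ defined by the tilde formulas agrees with the map $\ttc$ from Theorem~\ref{thm:main_result}.

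First I would kill the middle term: Proposition~\ref{pro:mu=0} gives $\mu_i(T) = 0$ for every $i$, which also eliminates $\tfrac{1}{2}\mu_i(T)$ from the level expression. Next, using Lemma~\ref{lemma:tree_levels}, for $i \in A_j$ every vertex of $T$ at a BFS level of parity opposite to $h(i)$ lies in $A_{\bar j}$, and the explicit description of edges in a Ferrers graph (recall that $(a,b) \in E$ iff $a \in A_1$, $b \in A_2$, $a>b$) then collapses the definition of $\lambda_i(T)$ to exactly the count $\tilde{\lambda}_i(T)$: the neighbors of $i$ at higher BFS levels are precisely the vertices in $T^{(>h(i))} \cap A_{\bar j}$ lying on the correct side of $i$. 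By the same reasoning, the set $N_G(i) \cap T^{(h(i)-1)}$ consists entirely of elements of $A_{\bar j}$ lying on the correct side of $i$, and one then checks that $\nu_i(T)$ and $\tilde{\nu}_i(T)$ index the position of $p(i)$ within this set using compatible conventions (counting neighbors strictly between $p(i)$ and $i$, which in each case amounts to the rank of $p(i)$ among the neighbors of $i$ at the previous BFS level).

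With the three summands identified, the recurrence of $c(T)$, the canonical-toppling statement, and the level formula are immediate consequences of the corresponding parts of Theorem~\ref{thm:main_result}. The double count that gives $\sum_i \tilde{\lambda}_i = |E|$ (each edge is counted at its lower BFS endpoint, and no edges are level-preserving by $\mu \equiv 0$) yields $\level{c(T)} = \sum_i \tilde{\nu}_i(T)$, matching the formula with $\tilde{\mu} \equiv 0$ substituted.

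The main obstacle, as in the original theorem, is bijectivity, and here the work is to match the Ferrers parent-selection rule with the interval formulation in $\tilde{\nu}_i$. The inverse reads off the BFS structure from $\CanonTop(c)$ and then recovers $p(i)$ for $i \in T^{(k)}$ from $r_i := c_i - \tilde{\lambda}_i(T) \in [0, |N_G(i) \cap T^{(k-1)}| - 1]$ as the element of $N_G(i) \cap T^{(k-1)}$ having exactly $r_i$ neighbors lying in the open interval between it and $i$. Once one checks that this rule produces a well-defined parent function, one concludes as in Theorem~\ref{thm:main_result}, either by verifying $T \mapsto c(T)$ and its candidate inverse compose to the identity, or by invoking the matrix-tree equality $|\text{spanning trees of } G| = |\Rec{s}{G}|$ together with injectivity of $T \mapsto c(T)$.
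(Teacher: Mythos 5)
Your overall route is the one the paper intends: specialize Theorem~\ref{thm:main_result}, use Proposition~\ref{pro:mu=0} to kill the $\mu$ term, and adjust the inverse map to account for the interval form of $\tilde{\nu}$. However, there is a genuine error in your central identification step: it is \emph{not} true that $\nu_i(T)$ and $\tilde{\nu}_i(T)$ agree, so the ``three summands'' are not identified, and $c(T)$ is in general a different configuration from $\ttc(T)$. Concretely, take $i \in A_1$ at height $k\geq 1$; then $T^{(k-1)}\subseteq A_2$ and every element of $N_G(i)\cap T^{(k-1)}$ is smaller than $i$, so $\nu_i(T)$ counts the neighbours of $i$ at height $k-1$ that are \emph{smaller} than $p(i)$, whereas $\tilde{\nu}_i(T)=\left\vert T^{(k-1)}\cap(p(i),i)\right\vert$ counts those that are \emph{larger} than $p(i)$; the two numbers sum to $\left\vert N_G(i)\cap T^{(k-1)}\right\vert-1$ and coincide only when $p(i)$ is the median of that set. (For $i\in A_2$ the two conventions do agree, which may be what misled you.) For example, in $G_{45123}$ (which is $K_{2,3}$ with parts $\{4,5\}$ and $\{1,2,3\}$), with $s=4$ and the spanning tree with edges $(4,1),(4,2),(4,3),(1,5)$, one gets $\nu_5=0$ but $\tilde{\nu}_5=2$. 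Consequently your sentence ``with the three summands identified, the recurrence of $c(T)$, the canonical-toppling statement, and the level formula are immediate consequences'' is not justified: Theorem~\ref{thm:main_result} asserts those facts about $\ttc(T)$, not about this $c(T)$. Your write-up is also internally inconsistent on this point: if the summands really were identified, the inverse would be exactly $\ctt$ and no ``matching of the parent-selection rule with the interval formulation'' would be needed.

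The repair is small, and it is precisely what the paper means by ``a slight tweaking.'' The forward half of the proof of Theorem~\ref{thm:main_result} uses only two properties of the third summand: it counts a subset of $N_G(i)\cap T^{(h(i)-1)}$ not containing $p(i)$ (hence a proper subset, which gives stability and the strict inequality for vertices beyond the toppling front), and it reappears verbatim in $c'_i$ after the earlier levels have toppled (giving $c'_i=d_i+\tilde{\nu}_i\geq d_i$ on the front, using $\mu\equiv 0$ and Lemma~\ref{lemma:tree_levels}). Since $T^{(h(i)-1)}\cap(p(i),i)\subseteq N_G(i)$ by the bipartite structure, $\tilde{\nu}_i$ has both properties, so recurrence, the canonical-toppling statement, and then the level formula (via $\sum_i\tilde{\lambda}_i(T)=\vert E\vert$) all go through with $\tilde{\nu}$ in place of $\nu$. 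Bijectivity then follows from the tweaked inverse you describe, which is indeed the correct one for $\tilde{\nu}$ and is well defined because $v\mapsto\left\vert T^{(k-1)}\cap(v,i)\right\vert$ is injective on $N_G(i)\cap T^{(k-1)}$ with image $\{0,\ldots,\vert N_G(i)\cap T^{(k-1)}\vert-1\}$. With the false identification removed and replaced by these observations, your argument is complete and matches the paper's intent.
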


In particular, we recover the bijection between the set of intransitive trees on $n$ vertices and the set of recurrent configurations on Ferrers graphs on $n$ vertices from~\cite{DSSS}. 

Note that the definition of $\tilde{\nu}$ in Equation~\eqref{eq:def_nu_fer} differs slightly from that of $\nu$ in Equation~\eqref{eq:def_nu} in Section~\ref{sec:main_thm} (the definition of $\tilde{\lambda}$ is the same as that of $\lambda$, though written slightly differently). This is due to the extra structure of intransitive trees, namely that every vertex is either a local minimum or a local maximum, which allows this simpler formula to be given. There is no additional difficulty in the proof, it merely requires a slight tweaking of the inverse map introduced in the proof of Theorem~\ref{thm:main_result}.

\subsection{Threshold graphs}\label{sec:thresholdgraphs}

Threshold graphs were introduced by Chv\'atal and Hammer~\cite{CH} and are defined as 
those graphs that can be constructed from a graph with one vertex by repeatedly adding an isolated vertex or a vertex that is connected to every already existing vertex.  It is easy to see that a threshold graph is the permutation graph of a permutation obtained from the permutation 1 by repeatedly appending or prepending a new largest letter.  One example of such a permutation is $86521347$; these are exactly the permutations that first 
decrease and then increase.  Note, however that a threshold graph may be disconnected and thus correspond to a decomposable permutation (which will have its largest letter last).

In~\cite{PYY} the authors present a general bijection between the parking functions of a graph and labeled spanning trees. In the case where $G$ is a threshold graph, this bijection maps the degree of the parking function to the number of inversions of the 
spanning tree (an inversion of a tree $T$ with vertex set $[n]$ is a pair $(i,j)$ such that $i>j$ and $j$ is an ancestor of $i$ in~$T$, relative to a designated root). Parking functions of a graph are essentially the same as recurrent configurations for the ASM, via a simple linear translation, with the degree of a parking function corresponding to the level of a recurrent configuration. Thus, the bijection in~\cite{PYY} can be viewed as a bijection between recurrent configurations on a threshold graph~$G$ and spanning trees of $G$, mapping the level statistic of the configuration to the number of inversions of the trees.

In Section~\ref{sec:Tutte}, we showed that our bijection in Theorem~\ref{thm:main_result} can be seen as a bijection between recurrent configurations on a permutation graph $G$ and spanning trees of $G$, mapping the level of the configuration to the external activity of the tree. 
It is known that the inversion and external activity statistics are equidistributed over labeled trees on $n$ vertices, and~\cite{Beis} provides a bijective proof of this fact. 
Since threshold graphs are a special case of permutation graphs, it follows that Theorem~\ref{thm:main_result} can be viewed as an extension of the work in~\cite{PYY}.

\bibliographystyle{abbrv}
\bibliography{ASM_Permgraphs}

\end{document}